\numberwithin{equation}{section}
\numberwithin{figure}{section}
\theoremstyle{plain}
\newtheorem{theorem}{Theorem}[section]
\newtheorem{lemma}[theorem]{Lemma}
\newtheorem{proposition}[theorem]{Proposition}
\theoremstyle{plain}
\theoremstyle{remark}
\newtheorem{remark}[theorem]{Remark}
\DeclareMathOperator{\vol}{vol}
\begin{document}
\date{}

\title
[Eigenvalues and lattice points]{A note on the Weyl formula for balls in $\mathbb{R}^d$}

\author{Jingwei Guo}
\address{School of Mathematical Sciences\\
University of Science and Technology of China\\
Hefei, 230026\\ P.R. China\\}
\email{jwguo@ustc.edu.cn}

\thanks{J.G. is partially supported by the NSFC Grant (No. 11501535 and 11571331) and the Fundamental Research Funds for the
Central Universities (No. WK3470000013). }

\subjclass[2010]{35P20, 33C10, 11P21} 

\keywords{Laplacian eigenvalues, balls, Weyl's law, lattice point problems.}

\begin{abstract}
Let $\mathscr{B}=\{x\in\mathbb{R}^d : |x|<R \}$ ($d\geq 3$) be a ball. We consider the Dirichlet Laplacian associated with $\mathscr{B}$ and prove that its eigenvalue counting function has an asymptotics
\begin{equation*}
\mathscr{N}_\mathscr{B}(\mu)=C_d \vol(\mathscr{B})\mu^d-C'_d\vol(\partial \mathscr{B})\mu^{d-1}+O\left(\mu^{d-2+\frac{131}{208}}(\log \mu)^{\frac{18627}{8320}}\right)
\end{equation*}
as $\mu\rightarrow \infty$.
\end{abstract}

\maketitle


\section{Introduction}\label{sec1}

Let $D \subset \mathbb R^d$ ($d\geq 2$) be an open bounded connected domain with piecewise smooth boundary, and let
\begin{equation*}
0< \mu^2_1 < \mu^2_2 \le \mu_3^2 \le \cdots \nearrow \infty
\end{equation*}
be the eigenvalues (counted with multiplicity) of the Dirichlet Laplacian associated with $D$. Weyl~\cite{weyl11:1912} initiated the study of the asymptotic behavior of the eigenvalue counting function
\begin{equation*}
\mathscr{N}_D(\mu)=\#\left\{k\in\mathbb{N} : \mu_k^2\leq \mu^2 \right\}. 
\end{equation*}

In this paper we study the Euclidean ball $\mathscr{B}=\{x\in\mathbb{R}^d : |x|<R \}$ with $R<\infty$. It is well-known that
\begin{equation*}
\mathscr{N}_\mathscr{B}(\mu)=(2\pi)^{-d}\omega_d \vol(\mathscr{B})\mu^d-4^{-1}(2\pi)^{-d+1}\omega_{d-1}\vol(\partial \mathscr{B})\mu^{d-1}+\mathscr{R}_\mathscr{B}(\mu)
\end{equation*}
with the remainder satisfying
\begin{equation}
\mathscr{R}_\mathscr{B}(\mu)=o(\mu^{d-1}), \label{known-bound}
\end{equation}
where $\omega_k=$ the volume of the unit ball in $\mathbb{R}^k$. See Ivrii~\cite{Ivrii1980} and Melrose~\cite{Mel:1980} for results on more general manifolds with boundary.

In dimension two, there are better bounds of $\mathscr{R}_\mathscr{B}(\mu)$.  The bound $O(\mu^{2/3})$ was given in Kuznecov and Fedosov~\cite{kuz:1965} and Colin de Verdi\`ere~\cite{colin:2011}, which was improved a little bit in \cite{GWW2018} and recently to $O(\mu^{131/208}(\log \mu)^{18627/8320})$ in \cite{GMWW:2019}. We guess that $O_{\epsilon}(\mu^{1/2+\epsilon})$ may be its true size.

The aim of this paper is to improve the bound \eqref{known-bound} in all dimensions greater than two.
\begin{theorem}\label{ball-thm}
\begin{equation*}
\mathscr{R}_\mathscr{B}(\mu)=O\left(\mu^{d-2+\frac{131}{208}}(\log \mu)^{\frac{18627}{8320}}\right).
\end{equation*}
\end{theorem}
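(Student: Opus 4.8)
The plan is to start from the explicit spectrum of the ball. Separating variables in polar coordinates, the Dirichlet eigenvalues of $\mathscr{B}$ are $(j_{\nu_\ell,k}/R)^2$, where $j_{\nu,k}$ is the $k$-th positive zero of the Bessel function $J_\nu$, $\nu_\ell=\ell+\tfrac{d-2}{2}$ ($\ell\ge 0$), and $k\ge 1$; the eigenvalue attached to $(\ell,k)$ occurs with multiplicity $d_\ell=\dim\mathcal H^d_\ell$, which is a polynomial in $\ell$ of degree $d-2$ with leading coefficient $2/(d-2)!$. Writing $x=R\mu$ and $N_\nu(x)=\#\{k\ge 1: j_{\nu,k}\le x\}$, this gives the exact identity
\[ \mathscr N_\mathscr{B}(\mu)=\sum_{\ell\ge 0} d_\ell\,N_{\nu_\ell}(x). \]
The first ingredient is then a uniform asymptotic for $N_\nu(x)$ with a controlled error. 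The Liouville--Green (Debye) analysis of $J_\nu$ yields, for $0\le\nu<x$,
\[ N_\nu(x)=\frac1\pi\psi(x,\nu)-\frac14-\rho\!\Big(\tfrac1\pi\psi(x,\nu)+\tfrac14+\text{l.o.t.}\Big)+O\big(E_\nu(x)\big), \]
where $\psi(x,\nu)=\sqrt{x^2-\nu^2}-\nu\arccos(\nu/x)$, $\rho(t)=\{t\}-\tfrac12$ is the sawtooth, $\partial_\nu\psi=-\arccos(\nu/x)$, $\partial_\nu^2\psi=(x^2-\nu^2)^{-1/2}$, the error $E_\nu(x)$ is integrable against the weight $\ell^{d-2}$ up to the turning point, and $N_\nu(x)=O(1)$ once $\nu\ge x-cx^{1/3}$, so that effectively $\nu_\ell\lesssim x$.

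Next I would separate the smooth (Weyl) and arithmetic (sawtooth) parts of the sum. Summing $\frac1\pi\psi(x,\nu_\ell)-\frac14$ against $d_\ell$ by Euler--Maclaurin reproduces $C_d\vol(\mathscr{B})\mu^d-C'_d\vol(\partial\mathscr{B})\mu^{d-1}+O(x^{d-2})$; here the constant $-\tfrac14$, the lower-order phase terms, and the sub-leading coefficients of the polynomial $d_\ell$ combine to produce the surface term with the correct constant $C'_d$, and the contributions of $E_{\nu_\ell}(x)$, summed with weight $d_\ell$, are $O(x^{d-2})$. Everything then reduces to bounding the arithmetic sum
\[ \Sigma(x)=\sum_{\ell\ge 0} d_\ell\,\rho\!\Big(\tfrac1\pi\psi(x,\nu_\ell)+\tfrac14+\text{l.o.t.}\Big). \]

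For $\Sigma(x)$ I would exploit that $d_\ell$ is a polynomial of degree $d-2$: iterating Abel summation $d-2$ times (equivalently, writing $\ell^{d-2}$ as an iterated partial sum of the constant sequence) reduces $\Sigma(x)$ to a bounded linear combination of iterated tail sums of the sequence $\big(\rho(\tfrac1\pi\psi(x,\nu_\ell)+\cdots)\big)_\ell$, with binomial weights of size $O(\ell^{d-3})$ in front; since that sequence is supported on $\ell\lesssim x$, any bound $\big|\sum_{\ell\ge L}\rho(\cdots)\big|\ll\mathcal E(x)$ uniform in $L\ge 0$ gives $\Sigma(x)\ll x^{d-2}\mathcal E(x)$. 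But, up to the harmless slowly varying phase corrections (and an irrelevant integer-or-half-integer shift according to the parity of $d$), these tail sums are exactly the sawtooth sums over the one-parameter family of Bessel phases that are controlled in the two-dimensional paper \cite{GMWW:2019}: Fourier-expanding $\rho$, truncating, and applying the Bombieri--Iwaniec/Huxley exponential-sum machinery to the phases $\tfrac{2h}{\pi}\psi(x,\nu)$ --- whose $\nu$-Hessian is $\asymp h/x$ and whose Legendre transform in $\nu$ is the circle-type function $-x|\sin(\cdot)|$, which is why Huxley's Gauss-circle exponent surfaces --- yields $\mathcal E(x)\ll x^{131/208}(\log x)^{18627/8320}$. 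Hence $\Sigma(x)\ll x^{d-2+131/208}(\log x)^{18627/8320}$, which is the theorem.

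The step I expect to be the main obstacle is carrying out this last reduction without loss. Two points require care: (i) the Bessel asymptotic must hold with an error that is uniform all the way into the transitional zone $\nu_\ell=x+O(x^{1/3})$, where $\psi(x,\nu)\asymp x^{-1/2}(x-\nu)_+^{3/2}$ and naive estimates for the sawtooth sum are too weak to beat $x^{131/208}$; and (ii) one must verify that the sub-range sums $\sum_{\ell\ge L}\rho(\cdots)$ --- not merely the full sum --- satisfy the Huxley bound with the same exponent and the same power of the logarithm, so that the polynomial weight $d_\ell$ costs precisely the factor $x^{d-2}$ and nothing logarithmic. Once (i) and (ii) are in place, the Euler--Maclaurin bookkeeping for the two Weyl terms and the verification that all remaining errors (WKB, transitional zone, boundary terms from Abel summation) are $O(x^{d-2+131/208})$ are routine.
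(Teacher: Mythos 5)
Your proposal is correct and follows essentially the same route as the paper: uniform Bessel asymptotics reduce everything to the phase $xg(\nu/x)=\frac1\pi\psi(x,\nu)$ (the paper approximates the individual zeros, $j_{\nu,k}=F(\nu,k-1/4)+R_{\nu,k}$, and counts lattice points in $\mu\mathcal{D}$ rather than expanding the counting function $N_\nu(x)$ with a sawtooth, but these are equivalent), a single Abel summation over the multiplicities $m_n^d$ with differences $O(l^{d-3})$ handles the polynomial weight at the cost of a factor $\mu^{d-2}$, Euler--Maclaurin produces the two Weyl terms, and Huxley's rounding-error bounds, applied uniformly over subintervals as his Proposition~3 allows, give the exponent $131/208$. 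The two delicate points you flag are precisely the ones the paper resolves via Olver's Airy-regime asymptotics near the turning point (yielding $R_{\nu,k}=O(\nu^{1/3}k^{-4/3})$ there) and via the $l$-truncated lattice counts $\mathcal{N}_l(\mu)$ of Lemma~\ref{lattice-number}.
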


For results on other interesting (planar) domains, see \cite{Kuznecov:1965} for ellipses, \cite{GMWW:2019} for annuli, \cite{Kuznecov:1966} for domains of separable variables type, etc.

We follow the strategy used in Colin de Verdi\`ere~\cite{colin:2011} to prove Theorem \ref{ball-thm}--first establish the correspondence between eigenvalues and certain lattice points and then count the number of lattice points instead. We notice that previously such a strategy was mainly used to study two dimensional domains like planar disks, annuli, etc. Our work below shows that it also works well for high dimensional domains. A new difficulty is that we need to count lattice points with different weights originated from the multiplicities of eigenvalues. We overcome it by decomposing this new lattice counting problem into finitely many standard lattice counting problems associated with domains of decreasing sizes.

An interesting weighted lattice point problem was recently studied in Iosevich and Wyman \cite{IW19} for their study of the Weyl law for products of manifolds without boundary (e.g. spheres). The authors adjusted the standard method (of using oscillatory integrals and the Poisson summation formula) according to the weights. In particular, they mollify the product of the characteristic function of a ball and the homogeneous weight function and estimate the size of the Fourier transform of such a mollified product.

We remark that it is possible to improve the bound in Theorem \ref{ball-thm} by applying the decoupling techniques in harmonic analysis. They are emerging powerful tools which have many applications especially in number theoretical problems. See for example the preprint \cite{BWpreprint} by Bourgain and Watt in which they proved (among others) improved estimates for the Dirichlet divisor and Gauss circle problems.

For functions $f$ and $g$ with $g$ taking nonnegative real values,
$f\lesssim g$ means $|f|\leqslant Cg$ for some constant $C$. If $f$
is nonnegative, $f\gtrsim g$ means $g\lesssim f$. The Landau
notation $f=O(g)$ is equivalent to $f\lesssim g$. The notation
$f\asymp g$ means that $f\lesssim g$ and $g\lesssim f$.

The rest of this paper is organized as follows. In Section \ref{sec2} we provide approximations of zeros of the Bessel function of the first kind and nonnegative order. It generalizes the results in Colin de Verdi\`ere~\cite[Section 3]{colin:2011} where the order of the Bessel function is assumed to be nonnegative integers. In Section \ref{sec3} we deal with the aforementioned difficulty and prove that the eigenvalue counting problem can be indeed reduced to finitely many similar lattice counting problems. In Section \ref{sec4} we resolve the latter problems to obtain the main theorem.


\section{Zeros of the Bessel function $J_{\nu}(x)$} \label{sec2}

Throughout this paper we use
\begin{equation}
g(t)=\left(\sqrt{1-t^2}-t\arccos t\right)/\pi,\quad t\in [0,1].  \label{def-g}
\end{equation}

There are enormous literature on the theory of Bessel functions (including their asymptotics and zeros). See for example \cite{watson:1966} and \cite[Chapter 9 and 10]{abram:1972}. For our study of Laplacian eigenvalues of balls we need the following two lemmas on asymptotics of the Bessel function of the first kind $J_{\nu}$. They can be easily proved by using the method of stationary phase or Olver's asymptotics of Bessel functions.

\begin{lemma} \label{lemma1}
For any $c>0$ and $\nu\geq 0$, if $x\geq \max\{(1+c)\nu, 2\}$ then
\begin{equation}
J_{\nu}(x)=\frac{(2/\pi)^{1/2}}{(x^2-\nu^2)^{1/4}} \left(\sin\left( \pi
x g\left(\frac{\nu}{x}\right)+\frac{\pi}{4}\right)+O_c\left(x^{-1}\right)\right).\label{stat-pha-0}
\end{equation}
\end{lemma}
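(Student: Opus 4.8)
The plan is to apply the classical method of stationary phase to Schl\"afli's integral representation
\[
J_\nu(x)=\frac{1}{\pi}\int_0^\pi \cos\bigl(x\sin\theta-\nu\theta\bigr)\,d\theta-\frac{\sin(\nu\pi)}{\pi}\int_0^\infty e^{-x\sinh t-\nu t}\,dt ,
\]
valid for every $\nu\geq 0$ (see \cite{watson:1966}), and to treat the two integrals separately. For the first one the phase is $\phi(\theta)=x\sin\theta-\nu\theta$, with $\phi'(\theta)=x\cos\theta-\nu$; since $x\geq(1+c)\nu$ we have $\nu/x\leq 1/(1+c)<1$, so there is exactly one critical point $\theta_0=\arccos(\nu/x)$ in $(0,\pi)$, and it is bounded away from both endpoints uniformly in $c$. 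A direct computation gives $\phi(\theta_0)=\sqrt{x^2-\nu^2}-\nu\arccos(\nu/x)=\pi x\,g(\nu/x)$ and $\phi''(\theta_0)=-\sqrt{x^2-\nu^2}<0$, while the same hypothesis forces $x^2-\nu^2\asymp_c x^2$; thus the effective large parameter driving the oscillation is comparable to $x$ and every derivative of $\phi$ is $O(x)$. (The condition $x\geq 2$ serves only to make the resulting error terms genuinely small; it is needed because $x\geq(1+c)\nu$ is vacuous when $\nu=0$.)

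First I would localize to a fixed neighborhood of $\theta_0$. Writing $\cos=\Re\,e^{i(\cdot)}$ and invoking the standard one-dimensional stationary phase formula for an integral with a single nondegenerate critical point, this piece equals
\[
\Re\!\left[\frac1\pi\,e^{i\pi x g(\nu/x)}\,\sqrt{2\pi}\,(x^2-\nu^2)^{-1/4}\,e^{-i\pi/4}\right]
=\Bigl(\tfrac2\pi\Bigr)^{1/2}(x^2-\nu^2)^{-1/4}\cos\!\Bigl(\pi x g(\nu/x)-\tfrac\pi4\Bigr)
\]
up to an error $O_c\bigl(x^{-3/2}\bigr)$; since $\cos(\vartheta-\pi/4)=\sin(\vartheta+\pi/4)$ this is exactly the asserted main term, and because $(x^2-\nu^2)^{-1/4}\asymp_c x^{-1/2}$ the error $O_c(x^{-3/2})$ is precisely a relative error $O_c(x^{-1})$ against that prefactor. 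On the complementary set $\{|\theta-\theta_0|\geq\delta\}$ I would integrate by parts; as $x\sin\theta$ and $\nu\theta$ are odd, the integrand is even near $\theta=0$, so $\sin\phi$ and the relevant boundary quantities vanish there and the lower endpoint contributes nothing, whereas the upper endpoint $\theta=\pi$ produces a genuine term, equal to $\dfrac{\sin(\nu\pi)}{\pi(x+\nu)}$, together with lower-order pieces that are $O_c(x^{-2})$.

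The step that is not routine --- and which I expect to be the real content of the argument --- is that this surviving endpoint term is of size $\asymp 1/x$, far too large to be absorbed into the claimed error, and it must be cancelled by the second Schl\"afli integral. A short Laplace-type estimate (the mass of $e^{-(x+\nu)t}$ lives in $t\lesssim 1/x$, where $\sinh t-t=O(t^3)$) gives
\[
\int_0^\infty e^{-x\sinh t-\nu t}\,dt=\frac{1}{x+\nu}+O\bigl(x^{-3}\bigr),
\]
so that $-\tfrac{\sin(\nu\pi)}{\pi}\int_0^\infty e^{-x\sinh t-\nu t}\,dt=-\tfrac{\sin(\nu\pi)}{\pi(x+\nu)}+O(x^{-3})$ cancels the endpoint term exactly. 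Collecting everything yields $J_\nu(x)=(2/\pi)^{1/2}(x^2-\nu^2)^{-1/4}\sin\bigl(\pi x g(\nu/x)+\pi/4\bigr)+O_c(x^{-3/2})$, which is the assertion. Alternatively, one could quote Olver's uniform asymptotic expansion of $J_\nu$ in terms of Airy functions and specialise it to the range $\nu/x\leq 1/(1+c)$, which stays bounded away from the turning point $\nu/x=1$ so that the Airy functions may be replaced by their oscillatory asymptotics with coefficients uniformly bounded in $\nu$; that route trades the explicit computations above for a verification of uniformity in Olver's error estimates.
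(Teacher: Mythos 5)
Your proposal is correct and follows essentially the same route as the paper: stationary phase at $\theta_0=\arccos(\nu/x)$ applied to the first Schl\"afli integral, integration by parts away from the critical point producing the endpoint term $\sin(\nu\pi)/(\pi(x+\nu))$, and a Laplace-type evaluation of the second integral as $(x+\nu)^{-1}+O((x+\nu)^{-3})$ so that the two $O(1/x)$ pieces cancel exactly. You correctly identify this cancellation as the one non-routine point, which is precisely the content of the paper's argument.
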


\begin{proof}
When $\nu$ is a nonnegative integer, a proof is provided in the appendix of \cite{GMWW:2019}. For real positive $\nu$ we recall the integral representation of $J_{\nu}(x)$ for $x>0$ (\cite[p.360]{abram:1972})
\begin{equation}
J_{\nu}(x)=\frac{1}{\pi}\int_0^\pi \cos (x\sin\theta-\nu \theta) \,\textrm{d}\theta-\frac{\sin(\nu \pi)}{\pi}\int_0^\infty e^{-x\sinh t-\nu t} \,\textrm{d}t. \label{bessel-fcn}
\end{equation}

We first study the integral
\begin{equation*}
\int_0^\pi e^{i x \phi(\theta)} \,\textrm{d}\theta \quad \textrm{with} \quad \phi(\theta)=\sin\theta-\frac{\nu}{x}\theta
\end{equation*}
and then take the real part to obtain the first integral in \eqref{bessel-fcn}. The phase function $\phi$ has only one critical point  $\beta:=\arccos(\nu/x)$ in $[0, \pi]$. Applying the method of stationary phase in a sufficiently small neighborhood of $\beta$ yields the contribution
\begin{equation}
\left(2\pi\right)^{1/2}|\phi''(\beta)|^{-1/2} e^{i(x\phi(\beta)-\pi/4)}x^{-1/2}+O_c(x^{-3/2}).\label{stat-pha-2}
\end{equation}
Applying integration by parts twice over the domain away from $\beta$ yields the contribution of the real part   \begin{equation}
\frac{\sin(\nu\pi)}{x+\nu}+O_c(x^{-2}).\label{stat-pha-3}
\end{equation}

We next show that
\begin{equation}
\int_0^\infty e^{-x\sinh t-\nu t}  \,\textrm{d}t=\frac{1}{x+\nu}+O\left(\frac{1}{(x+\nu)^{3}}\right).\label{stat-pha-1}
\end{equation}
Indeed, by using the Maclaurin series of $\sinh t$ and changing variables $s=(x+\nu)t$ we have
\begin{equation*}
\int_0^\infty e^{-x\sinh t-\nu t}  \,\textrm{d}t=\frac{1}{x+\nu}\int_0^\infty e^{-s}e^{-x\sigma\left(\frac{s}{x+\nu} \right)}  \,\textrm{d}s,
\end{equation*}
where
\begin{equation*}
\sigma(t)=\sum_{k=1}^{\infty} \frac{t^{2k+1}}{(2k+1)!}.
\end{equation*}
Therefore, by the mean value theorem, we get
\begin{equation*}
\left|\int_0^\infty e^{-x\sinh t-\nu t}  \,\textrm{d}t-\frac{1}{x+\nu}\right|\leq \frac{x}{x+\nu} \int_0^\infty e^{-s}\sigma\left(\frac{s}{x+\nu} \right)  \,\textrm{d}s.
\end{equation*}
Simplifying the right hand side with the gamma function yields a bound $O((x+\nu)^{-3})$. This proves \eqref{stat-pha-1}.

Finally, using \eqref{stat-pha-2}, \eqref{stat-pha-3} and \eqref{stat-pha-1} we get the desired asymptotics.
\end{proof}

\begin{lemma}\label{lemma2}
For any $c>0$ and all sufficiently large $\nu$, if $\nu<x\leq (1+c)\nu$  then
\begin{equation}
J_{\nu}(x)=\frac{\left(12\pi x g\left(\nu/x \right)\right)^{1/6}}{\left(x^2-\nu^2\right)^{1/4}}
\left(\mathrm{Ai}\left(-\left(\frac{3\pi}{2} x g\left(\frac{\nu}{x} \right)\right)^{2/3}\right)+E_{\nu}(x)\right),\label{olver3}
\end{equation}
where Ai denotes the Airy function of the first kind and
\begin{equation}
E_{\nu}(x)=O\left(\nu^{-4/3}\left(1+\left(x g\left(\frac{\nu}{x} \right)\right)^{1/6}\right)\right).\label{olver2}
\end{equation}
If we further assume that $x g(\nu/x)\geq 1$ then
\begin{equation}
J_{\nu}(x)=\frac{(2/\pi)^{1/2}}{\left(x^2-\nu^2\right)^{1/4}}
\left(\sin\left( \pi x g\left(\frac{\nu}{x}\right)+\frac{\pi}{4}\right)+O\left(\left(x g\left(\frac{\nu}{x}\right)\right)^{-1} \right)\right).\label{olver4}
\end{equation}
\end{lemma}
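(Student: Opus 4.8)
The plan is to obtain both \eqref{olver3} and \eqref{olver4} from Olver's uniform asymptotic expansion of Bessel functions of large order in terms of the Airy function, rather than from a direct stationary phase argument on \eqref{bessel-fcn} (which is also feasible but messier, since as $\nu/x\to 1$ the critical point $\beta=\arccos(\nu/x)$ of the phase $\phi(\theta)=\sin\theta-\frac{\nu}{x}\theta$ coalesces with the endpoint $\theta=0$, forcing a Chester--Friedman--Ursell cubic change of variables). Write $x=\nu z$ with $1<z\le 1+c$. Olver's expansion (see \cite{watson:1966,abram:1972}) asserts that, for large $\nu$,
\[
J_\nu(\nu z)=\left(\frac{4\zeta}{1-z^2}\right)^{1/4}\left(\frac{\mathrm{Ai}(\nu^{2/3}\zeta)}{\nu^{1/3}}\sum_{k\ge 0}\frac{A_k(\zeta)}{\nu^{2k}}+\frac{\mathrm{Ai}'(\nu^{2/3}\zeta)}{\nu^{5/3}}\sum_{k\ge 0}\frac{B_k(\zeta)}{\nu^{2k}}\right),
\]
with $A_0\equiv 1$, with $A_k(\zeta),B_k(\zeta)$ analytic (in particular regular at $\zeta=0$), with the usual bounds on the remainder after finitely many terms, and with $\zeta=\zeta(z)<0$ defined for $z>1$ by $\tfrac23(-\zeta)^{3/2}=\int_1^z\frac{\sqrt{t^2-1}}{t}\,dt$.

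The first step is to recognize $\zeta$ in terms of the function $g$ of \eqref{def-g}. The substitution $t=\sec\theta$ gives $\int_1^z\frac{\sqrt{t^2-1}}{t}\,dt=\sqrt{z^2-1}-\arccos(1/z)$, and by \eqref{def-g} this equals $z\pi g(1/z)=\frac{x}{\nu}\pi g(\nu/x)$. Hence $(-\zeta)^{3/2}=\frac{3\pi}{2\nu}x g(\nu/x)$, so $\nu^{2/3}\zeta=-\bigl(\tfrac{3\pi}{2}x g(\nu/x)\bigr)^{2/3}$, which is precisely the argument of $\mathrm{Ai}$ in \eqref{olver3}. Using $z^2-1=(x^2-\nu^2)/\nu^2$ one then checks the identity
\[
\left(\frac{4\zeta}{1-z^2}\right)^{1/4}\nu^{-1/3}=\frac{\bigl(12\pi x g(\nu/x)\bigr)^{1/6}}{(x^2-\nu^2)^{1/4}},
\]
the constants matching because $(12\pi)^{1/6}=2^{1/3}(3\pi)^{1/6}=4^{1/4}(3\pi/2)^{1/6}$. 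Thus the $k=0$, $\mathrm{Ai}$-term of Olver's expansion is exactly the main term of \eqref{olver3}.

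Next I would bound the error term $E_\nu(x)$, which is everything beyond that main term after dividing by $(12\pi x g(\nu/x))^{1/6}/(x^2-\nu^2)^{1/4}$. On the interval $1<z\le 1+c$ the parameter $\zeta$ ranges over a fixed compact interval $[\zeta(1+c),0)$, so $A_k(\zeta),B_k(\zeta)$ are uniformly bounded there and the one-term remainder is $O(\nu^{-2})$; using the identity above to convert the prefactor, one gets
\[
E_\nu(x)=\mathrm{Ai}(\nu^{2/3}\zeta)\,O(\nu^{-2})+\nu^{-4/3}\mathrm{Ai}'(\nu^{2/3}\zeta)\bigl(B_0(\zeta)+O(\nu^{-2})\bigr).
\]
Since $\mathrm{Ai}$ is bounded on $\mathbb R$ while $\mathrm{Ai}'(-s)=O(s^{1/4})$ as $s\to+\infty$, and $|\nu^{2/3}\zeta|^{1/4}=\bigl(\tfrac{3\pi}{2}xg(\nu/x)\bigr)^{1/6}\lesssim (xg(\nu/x))^{1/6}$, this yields $E_\nu(x)=O\bigl(\nu^{-4/3}(1+(xg(\nu/x))^{1/6})\bigr)$, which is \eqref{olver2}.

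Finally, \eqref{olver4} comes from inserting into \eqref{olver3} the oscillatory expansion $\mathrm{Ai}(-s)=\pi^{-1/2}s^{-1/4}\bigl(\sin(\tfrac23 s^{3/2}+\tfrac\pi4)+O(s^{-3/2})\bigr)$, valid once $s$ is bounded away from $0$; the hypothesis $xg(\nu/x)\ge 1$ guarantees $s=\bigl(\tfrac{3\pi}{2}xg(\nu/x)\bigr)^{2/3}\ge(\tfrac{3\pi}{2})^{2/3}$, and $\tfrac23 s^{3/2}=\pi xg(\nu/x)$. The constants collapse once more ($\pi^{-1/2}(12\pi)^{1/6}/(3\pi/2)^{1/6}=\pi^{-1/2}8^{1/6}=(2/\pi)^{1/2}$), and the term carrying $E_\nu(x)$, namely $\bigl(12\pi xg(\nu/x)\bigr)^{1/6}(x^2-\nu^2)^{-1/4}E_\nu(x)$, is $O\bigl((x^2-\nu^2)^{-1/4}(xg(\nu/x))^{-1}\bigr)$ by \eqref{olver2} together with the elementary bound $xg(\nu/x)\le x/\pi\le (1+c)\nu/\pi$ (valid because $g$ is decreasing, so $g\le g(0)=1/\pi$); hence it is absorbed into the displayed error of \eqref{olver4}. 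The main obstacle here is organizational rather than conceptual: keeping track of the many fractional-power constants in the change of variables $\zeta\leftrightarrow g$, and invoking Olver's remainder bounds carefully enough that the exponent $\nu^{-4/3}$ and the factor $1+(xg(\nu/x))^{1/6}$ emerge exactly as stated.
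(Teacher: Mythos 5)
Your proof is correct and takes essentially the same route as the paper's: both obtain \eqref{olver3} by inserting Olver's uniform Airy-type expansion \eqref{olver1} with the identification $\nu^{2/3}\zeta=-\left(\frac{3\pi}{2}xg(\nu/x)\right)^{2/3}$ and bounding the remainder via $\mathrm{Ai}(-r)=O(1)$, $\mathrm{Ai}'(-r)=O(r^{1/4})$, and then deduce \eqref{olver4} from the oscillatory asymptotics of $\mathrm{Ai}(-r)$. You are in fact slightly more explicit than the paper in checking that the $E_{\nu}(x)$ contribution is absorbed into the $O\left(\left(xg(\nu/x)\right)^{-1}\right)$ error, using $xg(\nu/x)\lesssim\nu$.
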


\begin{proof}
Let $x=\nu z$. For sufficiently large $\nu$ we will apply Olver's asymptotics of $J_{\nu}(x)$, \eqref{olver1}, with $\zeta=\zeta(z)$ determined by \eqref{def-zeta1}. It is easy to check that $0<-\zeta\lesssim_c 1$ and
\begin{equation*}
\nu^{2/3}\zeta=-\left(\frac{3\pi}{2} x g\left(\frac{\nu}{x} \right)\right)^{2/3}.
\end{equation*}
Hence
\begin{equation*}
J_{\nu}(x)=\frac{\left(12\pi x g\left(\nu/x \right)\right)^{1/6}}{\left(x^2-\nu^2\right)^{1/4}}
\left(\mathrm{Ai}\left(\nu^{2/3}\zeta\right)+E_{\nu}(x)\right),
\end{equation*}
where
\begin{equation*}
E_{\nu}(x)=\mathrm{Ai}\left(\nu^{2/3}\zeta\right) O\left(\nu^{-2} \right)+\mathrm{Ai}'\left(\nu^{2/3}\zeta \right)O\left(\nu^{-4/3}\right).
\end{equation*}
Using $\mathrm{Ai}(-r)=O(1)$ and $\mathrm{Ai}'(-r)=O(r^{1/4})$ (see \cite[p.448--449]{abram:1972}), we readily get \eqref{olver2}. This proves the first part of the lemma.

The asymptotics \eqref{olver4} follows directly from \eqref{olver3} and the well-known formula
\begin{equation*}
\mathrm{Ai}(-r)=\pi^{-1/2}r^{-1/4}\left(\sin\left(\frac{2}{3}r^{3/2}+\frac{\pi}{4}\right)+O\left(r^{-3/2}\right)\right), \quad r>0
\end{equation*}
(see for example \cite[p.448]{abram:1972}).
\end{proof}

Since we want to study the spectrum of the Dirichlet Laplacian, we only need to consider positive zeros of the Bessel function $J_{\nu}$. For nonnegative $\nu$ let us denote the $k$th positive (simple) zero of $J_{\nu}$ by $j_{\nu, k}$. It is known that $\nu<j_{\nu,1}<j_{\nu,2}<j_{\nu,3}<\cdots$. See \cite[p.370]{abram:1972}.

\begin{proposition}\label{large-nu}
For any $c>0$ and all sufficiently large $\nu$ the zeros, $\{j_{\nu, k}\}_{k=1}^{\infty}$, satisfy the following:
\begin{enumerate}
\item if $j_{\nu,k}\geq (1+c)\nu$ then
\begin{equation}
j_{\nu,k} g\left(\frac{\nu}{j_{\nu,k}}\right)=k-\frac{1}{4}+O\left(\frac{1}{\nu+k}\right);\label{appro1}
\end{equation}

\item if $\nu<j_{\nu,k}<(1+c)\nu$ then
\begin{equation}
j_{\nu,k} g\left(\frac{\nu}{j_{\nu,k}}\right)=k-\frac{1}{4}+O\left(\frac{1}{k}\right).\label{appro2}
\end{equation}
\end{enumerate}
\end{proposition}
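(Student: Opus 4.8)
The plan is to extract the asymptotic location of $j_{\nu,k}$ directly from the asymptotics of $J_\nu$ established in Lemmas~\ref{lemma1} and \ref{lemma2}, using the fact that on the relevant ranges $J_\nu$ behaves like a sine of the phase $\Phi_\nu(x) := \pi x g(\nu/x) + \pi/4$, whose zeros are exactly the points where $\Phi_\nu(x) \in \pi\mathbb{Z}$. Concretely, $x g(\nu/x)$ is a strictly increasing function of $x$ on $(\nu,\infty)$ running from $0$ to $\infty$ (this follows from $g(0) = 1/\pi$, $g(1) = 0$, and $\frac{d}{dx}(xg(\nu/x)) = g(\nu/x) + \frac{\nu}{x}\cdot\text{(something)}$, which one checks is positive — in fact $\frac{d}{dt}(tg(a/t))$ works out cleanly because $g'(t) = -\arccos(t)/\pi$, so $\frac{d}{dx}\big(x g(\nu/x)\big) = \frac{1}{\pi}\sqrt{1-(\nu/x)^2} > 0$ on $(\nu,\infty)$). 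Hence the equation $x g(\nu/x) = m - 1/4$ has a unique solution $x_m$ for each positive integer $m$, and these are precisely the zeros of the sine term $\sin \Phi_\nu(x)$ in the range $x > \nu$. The proposition says $j_{\nu,k}$ is a perturbation of $x_k$.

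First I would handle case (1), $j_{\nu,k} \geq (1+c)\nu$, using \eqref{stat-pha-0}. At such a zero, $0 = J_\nu(j_{\nu,k})(x^2-\nu^2)^{1/4}(\pi/2)^{-1/2} = \sin\Phi_\nu(j_{\nu,k}) + O(j_{\nu,k}^{-1})$, so $\sin \Phi_\nu(j_{\nu,k}) = O(j_{\nu,k}^{-1})$, forcing $\Phi_\nu(j_{\nu,k}) = n\pi + O(j_{\nu,k}^{-1})$ for some integer $n$, i.e. $j_{\nu,k}\,g(\nu/j_{\nu,k}) = n - 1/4 + O(j_{\nu,k}^{-1})$. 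The content is then to pin down $n = k$. This is the standard argument: the zeros of $J_\nu$ and the zeros of the approximating sine interlace up to an error, and a counting/continuity argument matches indices. One clean way is to invoke the known interlacing/count of Bessel zeros below a given level (e.g. the McMahon-type expansion, or simply that the number of zeros of $J_\nu$ in $(0,x]$ differs from $\#\{m : x_m \le x\}$ by $O(1)$ uniformly, combined with the fact that once the error term $O(j_{\nu,k}^{-1})$ is smaller than the gap between consecutive $x_m$'s — which are spaced $\asymp 1$ apart near $x_k$ since the derivative of $x g(\nu/x)$ is $\asymp 1$ there — the matching is exact). Finally, since in this regime $j_{\nu,k} \asymp \nu + k$ (because $j_{\nu,k} g(\nu/j_{\nu,k}) = k + O(1)$ and $x g(\nu/x)$ grows linearly in $x$ for $x \gg \nu$, while near $x \sim (1+c)\nu$ one has $j_{\nu,k} \asymp \nu \asymp \nu+k$), the error $O(j_{\nu,k}^{-1})$ becomes $O((\nu+k)^{-1})$, giving \eqref{appro1}.

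For case (2), $\nu < j_{\nu,k} < (1+c)\nu$, I would split according to whether $x g(\nu/x) \ge 1$ at the zero. If $j_{\nu,k} g(\nu/j_{\nu,k}) \ge 1$, then \eqref{olver4} applies and the argument is identical to case (1): $\sin \Phi_\nu(j_{\nu,k}) = O((j_{\nu,k} g(\nu/j_{\nu,k}))^{-1})$, and since $j_{\nu,k} g(\nu/j_{\nu,k}) = k - 1/4 + o(1) \asymp k$, the error is $O(1/k)$, yielding \eqref{appro2}. The remaining case $j_{\nu,k} g(\nu/j_{\nu,k}) < 1$ covers only finitely many $k$ (those with $k - 1/4 + O(1/k) < 1$, i.e. $k = 1$ and possibly a bounded number more); for these one uses \eqref{olver3} directly — at a zero of $J_\nu$, the Airy term must cancel the error $E_\nu$, so $\mathrm{Ai}\big(-(\tfrac{3\pi}{2} x g(\nu/x))^{2/3}\big) = O(\nu^{-4/3}(1 + (x g(\nu/x))^{1/6})) = O(\nu^{-4/3})$; since the first positive zero of $\mathrm{Ai}$ is at $\approx 2.338$ and the map $x \mapsto (\tfrac{3\pi}{2} x g(\nu/x))^{2/3}$ has derivative bounded below near there, this forces $(\tfrac{3\pi}{2} j_{\nu,k} g(\nu/j_{\nu,k}))^{2/3}$ to lie within $O(\nu^{-4/3})$ of a zero $a_k$ of $\mathrm{Ai}$, hence $j_{\nu,k} g(\nu/j_{\nu,k}) = \tfrac{2}{3\pi} a_k^{3/2} + O(\nu^{-4/3})$. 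It remains to note that $\tfrac{2}{3\pi} a_k^{3/2} = k - 1/4 + O(1/k)$ for these small $k$, which is the Airy-zero asymptotic $a_k = (\tfrac{3\pi}{2}(k - 1/4))^{2/3}(1 + O(k^{-2}))$; this folds the finitely many cases into \eqref{appro2} and since $O(\nu^{-4/3}) = O(1/k)$ trivially for bounded $k$ and large $\nu$, we are done.

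The main obstacle is the \emph{index-matching}: deducing that the integer $n$ produced by $\sin\Phi_\nu(j_{\nu,k}) = O(\text{small})$ equals $k$, rather than merely $k + O(1)$. The inequality $\sin\Phi_\nu = O(\varepsilon)$ only locates $\Phi_\nu$ near \emph{some} multiple of $\pi$, and one needs a global argument — monotonicity of the phase together with a count of zeros — to rule out an index shift. The cleanest route is to prove that $\mathscr{N}_\nu(x) := \#\{k : j_{\nu,k} \le x\}$ and $\#\{m \ge 1 : x_m \le x\} = \lfloor x g(\nu/x) + 5/4 \rfloor$ differ by a bounded amount uniformly in $\nu$ (which itself can be extracted from the same asymptotics of $J_\nu$, or from the classical fact that $J_\nu$ has no zeros in $(0,\nu]$ together with an oscillation count on $(\nu,\infty)$), and then observe that once $x$ is large enough that the $O$-error is below the minimal spacing of the $x_m$ near level $x$, the two counting functions must actually agree, pinning $n = k$. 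I would present this as a short lemma before the proof, or cite the corresponding statement for integer $\nu$ in \cite{colin:2011} and note that the proof there goes through verbatim using Lemmas~\ref{lemma1} and \ref{lemma2} in place of the integer-order asymptotics.
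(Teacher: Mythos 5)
Your overall strategy---localize the zeros of $J_\nu$ near the solutions of $xg(\nu/x)=m-1/4$ via Lemmas \ref{lemma1} and \ref{lemma2}, then settle the index by a global argument---is exactly the paper's. The step that does not close as written is precisely the index matching, which you correctly single out as the main obstacle. The deduction you propose there (``the two counting functions differ by a bounded amount uniformly \dots\ once the $O$-error is below the minimal spacing \dots\ they must actually agree'') is not valid: a constant nonzero offset between two counting functions survives arbitrarily small localization errors, and localization alone leaves an off-by-one ambiguity --- the $K$ zeros of $J_\nu$ in $(\nu,\pi(K+\tfrac12\nu+\tfrac12))$ could a priori sit near the levels $m-1/4$ with $m=2,\dots,K+1$ rather than $m=1,\dots,K$, since that range of $h_\nu(x):=xg(\nu/x)$ contains $K+1$ such levels. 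The paper closes this with a two-sided argument: (i) for each $1\le k\le K$ it takes the interval $(a_{\nu,k},b_{\nu,k})$ on which $h_\nu$ runs over $(k-3/8,k)$ and verifies the sign change $J_\nu(a_{\nu,k})J_\nu(b_{\nu,k})<0$ --- via \eqref{stat-pha-0} and \eqref{olver4} when $h_\nu$ exceeds a fixed constant, and via \eqref{olver3} together with the known location of the Airy zeros inside $\bigl(\bigl(\tfrac{3\pi}{2}(k-0.36)\bigr)^{2/3},\bigl(\tfrac{3\pi}{2}(k-0.14)\bigr)^{2/3}\bigr)$ otherwise --- so each interval contains at least one zero; (ii) McMahon's expansion gives exactly $K$ zeros in the whole range, so each interval contains exactly one, namely $j_{\nu,k}$, whence $h_\nu(j_{\nu,k})-k+\tfrac14\in(-\tfrac18,\tfrac14)$. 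Only then are the asymptotics fed back in to sharpen the error to $O((\nu+k)^{-1})$ or $O(1/k)$. Your sketch needs this ``at least one zero per interval'' half made explicit; counting plus localization alone is not enough.

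The remaining pieces of your argument are sound and in places finer than the paper's. The derivative identity $\tfrac{d}{dx}\bigl(xg(\nu/x)\bigr)=\pi^{-1}\sqrt{1-(\nu/x)^2}$ is correct and is what makes the spacing of the levels $\asymp 1$ in the range $x\ge(1+c)\nu$, and your identification $j_{\nu,k}\asymp\nu+k$ there matches the paper's use of McCann's bound $j_{\nu,k}>\sqrt{\nu^2+\pi^2(k-1/4)^2}$. For the finitely many $k$ in case (2) with $h_\nu(j_{\nu,k})$ bounded, your Airy-zero expansion is more precise than necessary: the paper simply notes that the crude localization error from (ii), at most $1/4$, is already $O(1/k)$ for bounded $k$. (Two small cautions there: the quantity bounded below near the relevant points is $|\mathrm{Ai}'|$ at the Airy zeros, not the $x$-derivative of $x\mapsto(\tfrac{3\pi}{2}h_\nu(x))^{2/3}$, which degenerates like $\nu^{-1/3}$ as $x\downarrow\nu$; and the index matching for these finitely many zeros still has to come from the global count.)
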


\begin{proof}
For any fixed $c>0$ and any fixed sufficiently large $\nu$ we will use Lemma \ref{lemma1} and \ref{lemma2} to study zeros of $J_{\nu}(x)$ in the interval $(\nu, \pi(K+\frac{1}{2}\nu+\frac{1}{2}))$ for integer $K\rightarrow \infty$. In view of those asymptotics, we need to study the function $h_{\nu}(x):=xg(\nu/x)$ which is mapping from $(\nu, \pi(K+\frac{1}{2}\nu+\frac{1}{2}))$ onto $(0, K+\frac{1}{2}+O_{\nu}((K+\nu)^{-1}))$, continuous and strictly increasing. Hence for each integer $1\leq k\leq K$ there exists an interval $(a_{\nu, k}, b_{\nu, k})\subset (\nu, \pi(K+\frac{1}{2}\nu+\frac{1}{2}))$ such that $h_{\nu}$ maps $(a_{\nu, k}, b_{\nu, k})$ to $(k-3/8, k)$ bijectively.

If $\nu$ is sufficiently large then for each $1\leq k\leq K$
\begin{equation}
J_{\nu}(a_{\nu, k})J_{\nu}(b_{\nu, k})<0. \label{IVT-cond}
\end{equation}
This is a consequence of the asymptotics \eqref{stat-pha-0}, \eqref{olver3} and \eqref{olver4}. Heuristically \eqref{IVT-cond} follows from the facts that the sine and Airy functions oscillate around zero, $h_{\nu}$ is monotone, the error terms in the aforementioned asymptotics are small and the intervals $(a_{\nu, k}, b_{\nu, k})$ are properly chosen (to contain zeros).

To prove \eqref{IVT-cond} rigorously, we fix a sufficiently large constant $C>10$ such that if $h_{\nu}(x)>C$ then the error term $O(h_{\nu}(x)^{-1})$ in \eqref{olver4} is less than $10^{-10}$. We only consider sufficiently large $\nu$ such that the error term $O(x^{-1})$ in \eqref{stat-pha-0} is less than $10^{-10}$.
If
\begin{equation}
h_{\nu}\left(\left(a_{\nu, k}, b_{\nu, k}\right)\right)=(k-3/8,k)\subset (0, 2C] \label{case1}
\end{equation}
we use \eqref{olver3} to prove \eqref{IVT-cond}, otherwise we use \eqref{stat-pha-0} and \eqref{olver4}. In the former case when \eqref{case1} holds, there are at most $\lfloor 2C\rfloor$ choices of $k$. We denote by $t_k$ ($k\in\mathbb{N}$) the $k$th zero of the equation $\textrm{Ai}(-x)=0$. The results in \cite[p.405]{olver:1997} implies that $t_k$ is contained in the interval
\begin{equation*}
\left(\left(\frac{3\pi}{2}\left(k-0.36\right)\right)^{2/3}, \left(\frac{3\pi}{2}\left(k-0.14\right)\right)^{2/3} \right)
\end{equation*}
which is a proper subset of
\begin{equation*}
\left(\left(\frac{3\pi}{2}h_{\nu}(a_{\nu, k})\right)^{2/3}, \left(\frac{3\pi}{2}h_{\nu}(b_{\nu, k})\right)^{2/3} \right).
\end{equation*}
Hence
\begin{equation*}
\mathrm{Ai}\left(-\left(\frac{3\pi}{2} h_{\nu}(a_{\nu, k}) \right)^{2/3}\right)\mathrm{Ai}\left(-\left(\frac{3\pi}{2} h_{\nu}(b_{\nu, k}) \right)^{2/3}\right)<0.
\end{equation*}
Since the errors $E_{\nu}(a_{\nu, k})$ and $E_{\nu}(b_{\nu, k})$ in \eqref{olver3} are both of size $O_C(\nu^{-3/4})$, if $\nu$ is sufficiently large then \eqref{IVT-cond} follows easily. In the latter case when \eqref{case1} fails, we must have $h_{\nu}((a_{\nu, k}, b_{\nu, k}))\subset (C, \infty)$. Thus \eqref{IVT-cond} follows immediately from \eqref{stat-pha-0} and \eqref{olver4} if we notice that
\begin{equation*}
\sin\left( \pi h_{\nu}(a_{\nu, k})+\frac{\pi}{4}\right)\sin\left( \pi h_{\nu}(b_{\nu, k})+\frac{\pi}{4}\right)=-2^{-1/2}\sin\left(\frac{\pi}{8}\right)<0.
\end{equation*}

Let us now continue to prove the proposition. By the intermediate value theorem, \eqref{IVT-cond} implies that there exists at least one zero of $J_{\nu}$ in $(a_{\nu, k}, b_{\nu, k})$  for each $1\leq k\leq K$. On the other hand as a consequence of the McMahon's expansion of large zeros of $J_{\nu}$
\begin{equation*}
j_{\nu,K}=\pi\left(K+\frac{1}{2}\nu-\frac{1}{4} \right)+O_{\nu}\left(\frac{1}{K}\right)
\end{equation*}
(see \cite[p.371]{abram:1972}), there are exactly $K$ zeros in the interval $(\nu, \pi(K+\frac{1}{2}\nu+\frac{1}{2}))$. Hence there exists one and only one zero in each $(a_{\nu, k}, b_{\nu, k})$, i.e. $j_{\nu,k}$, satisfying
\begin{equation}
h_{\nu}(j_{\nu,k})-k+\frac{1}{4}\in \left(-\frac{1}{8}, \frac{1}{4}\right).\label{appro3}
\end{equation}

If $j_{\nu,k}\geq (1+c)\nu$ then \eqref{appro1} follows from \eqref{stat-pha-0}, \eqref{appro3} and the bound $j_{\nu,k}>\sqrt{\nu^2+\pi^2(k-1/4)^2}$ (see McCann~\cite[p.102]{McCann:1977}). If $\nu<j_{\nu,k}<(1+c)\nu$ and $h_{\nu}(j_{\nu,k})>C$ then \eqref{olver4} together with \eqref{appro3} gives
\begin{equation*}
h_{\nu}(j_{\nu,k})=k-\frac{1}{4}+O\left(h_{\nu}(j_{\nu,k})^{-1}\right).
\end{equation*}
This formula itself implies $h_{\nu}(j_{\nu,k})\asymp k$ hence \eqref{appro2}. If $\nu<j_{\nu,k}<(1+c)\nu$ and $h_{\nu}(j_{\nu,k})\leq C$ then $1\leq k<C+1$. Hence \eqref{appro2} follows trivially from \eqref{appro3}.
\end{proof}

\begin{remark}\label{rm1}
The error terms in \eqref{appro1} and \eqref{appro2} are of course small when $k$ is large. However they are quite small even when $k$ is small. In fact the proof above implies that the error in \eqref{appro1} is less than $10^{-10}$ while the one in \eqref{appro2} is less than $10^{-10}$ if $k\geq C+1$ and $1/4$ if $1\leq k<C+1$.
\end{remark}

\begin{proposition}\label{small-nu}
For any $V>0$ there exists a constant $K>0$ such that if $0\leq \nu\leq V$ and $k\geq K$ then the zeros $j_{\nu,k}$ satisfy
\begin{equation}
j_{\nu,k} g\left(\frac{\nu}{j_{\nu,k}}\right)=k-\frac{1}{4}+O_V\left(\frac{1}{\nu+k}\right).\label{appro4}
\end{equation}
\end{proposition}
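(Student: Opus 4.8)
The plan is to reduce Proposition \ref{small-nu} to Lemma \ref{lemma1} in essentially the same way that case (1) of Proposition \ref{large-nu} was handled, the key point being that when $\nu$ is \emph{bounded} (rather than large), the zeros $j_{\nu,k}$ with $k$ large automatically lie far out in the region $x\geq \max\{(1+c)\nu,2\}$ where the stationary-phase asymptotics \eqref{stat-pha-0} is valid. Concretely, fix $V>0$. For $0\leq\nu\leq V$ the McMahon expansion $j_{\nu,k}=\pi(k+\nu/2-1/4)+O_\nu(1/k)$ (see \cite[p.371]{abram:1972}) shows that $j_{\nu,k}\to\infty$ uniformly in $\nu\in[0,V]$ as $k\to\infty$; in particular there is $K_0=K_0(V)$ so that $k\geq K_0$ forces $j_{\nu,k}\geq \max\{2V+2,2\}\geq\max\{(1+c)\nu,2\}$ with, say, $c=1$. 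So for all such $k$ the Bessel function at $j_{\nu,k}$ is governed by \eqref{stat-pha-0}.

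Next I would repeat the interval-trapping argument. Consider $h_\nu(x)=xg(\nu/x)$, which is continuous, strictly increasing, and maps $(\nu,\infty)$ onto $(0,\infty)$. For each integer $k$ pick the interval $(a_{\nu,k},b_{\nu,k})$ on which $h_\nu$ takes values in $(k-3/8,k)$. Provided $k$ is large enough (depending only on $V$, to push the relevant part of the domain into $x\geq 2$ and to make the error $O(x^{-1})$ in \eqref{stat-pha-0} smaller than $10^{-10}$), the sign of $J_\nu$ at the endpoints is that of $\sin(\pi h_\nu+\pi/4)$, and since
\begin{equation*}
\sin\left(\pi h_\nu(a_{\nu,k})+\frac{\pi}{4}\right)\sin\left(\pi h_\nu(b_{\nu,k})+\frac{\pi}{4}\right)=\sin\left(\pi(k-3/8)+\frac{\pi}{4}\right)\sin\left(\pi k+\frac{\pi}{4}\right)=-2^{-1/2}\sin\left(\frac{\pi}{8}\right)<0,
\end{equation*}
we get $J_\nu(a_{\nu,k})J_\nu(b_{\nu,k})<0$, so $J_\nu$ has a zero in $(a_{\nu,k},b_{\nu,k})$. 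Counting via the McMahon expansion as in Proposition \ref{large-nu} shows that zero is exactly $j_{\nu,k}$ (for $k\geq K$ with $K=K(V)$ chosen large enough), whence $h_\nu(j_{\nu,k})-k+1/4\in(-1/8,1/4)$. Feeding this back into \eqref{stat-pha-0}, the location of $j_{\nu,k}$ as a near-zero of the sine gives $h_\nu(j_{\nu,k})=k-1/4+O(j_{\nu,k}^{-1})$; since $j_{\nu,k}\asymp_V \nu+k$ (again McMahon, with $\nu$ bounded), this is \eqref{appro4}.

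The only mild subtlety — and the step I expect to need the most care — is making every threshold depend on $V$ \emph{alone} and not on $\nu$: one must check that the passage from $k$ large to ``$j_{\nu,k}$ large'' and to ``the $O(x^{-1})$ error in \eqref{stat-pha-0} is negligible'' can be done uniformly over $\nu\in[0,V]$, which it can, precisely because all the implied constants in Lemma \ref{lemma1} (with $c=1$ fixed) and in the McMahon expansion are uniform for bounded $\nu$. Note also that, in contrast to Proposition \ref{large-nu}, there is no need for the Airy/Olver regime here: boundedness of $\nu$ together with largeness of $k$ keeps us permanently in the oscillatory region, so Lemma \ref{lemma1} alone suffices.
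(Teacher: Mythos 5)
Your argument is correct in substance but takes a genuinely different (and considerably longer) route than the paper. The paper's proof is a two-line computation that never touches the Bessel asymptotics: for $\nu\in[0,V]$ and $k$ large, McMahon's expansion gives $j_{\nu,k}=\pi(k+\tfrac12\nu-\tfrac14)+O_V((k+\nu)^{-1})$ \emph{quantitatively}, and the Taylor expansion of $g$ at $0$ (namely $g(t)=\tfrac1\pi-\tfrac{t}{2}+O(t^2)$) gives $j_{\nu,k}\,g(\nu/j_{\nu,k})=\tfrac1\pi j_{\nu,k}-\tfrac12\nu+O_V(j_{\nu,k}^{-1})$; substituting the former into the latter is already \eqref{appro4}. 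You instead use McMahon only qualitatively (for counting and for $j_{\nu,k}\asymp_V k$) and extract the quantitative statement from \eqref{stat-pha-0} via the interval-trapping argument of Proposition \ref{large-nu}. That works, and your observation that bounded $\nu$ plus large $k$ keeps everything in the oscillatory regime (so no Airy analysis is needed) is exactly right; but since you invoke McMahon anyway, the direct route is strictly shorter. One step in your version does need more care than "as in Proposition \ref{large-nu}": there the pigeonhole worked because a sign change was established in \emph{every} interval $(a_{\nu,k},b_{\nu,k})$, $1\le k\le K$, matching the exact count of $K$ zeros in $(\nu,\pi(K+\tfrac12\nu+\tfrac12))$. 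In your setting the sign change is only available for $k\ge K_0(V)$, so the unaccounted-for first $K_0-1$ zeros must be localized separately (e.g., again by McMahon, showing $j_{\nu,K_0-1}<a_{\nu,K_0}$) before you can conclude that the unique zero trapped in $(a_{\nu,k},b_{\nu,k})$ is the $k$-th one. Once that is patched — or once you notice that McMahon's error term already places $j_{\nu,k}$ inside $(a_{\nu,k},b_{\nu,k})$ without any intermediate value argument — your final refinement via the smallness of $\sin(\pi h_\nu(j_{\nu,k})+\tfrac{\pi}{4})$ correctly yields \eqref{appro4}, with all constants uniform for $\nu\in[0,V]$ as you note.
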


\begin{proof}
For any $V>0$, if $0\leq \nu\leq V$ and $k$ is sufficiently large then the McMahon's expansion (\cite[p.371]{abram:1972}) gives
\begin{equation*}
j_{\nu,k}=\pi\left(k+\frac{1}{2}\nu-\frac{1}{4} \right)+O_{V}\left(\frac{1}{k+\nu}\right).
\end{equation*}
Taylor's formula of $g$ at $0$ also gives
\begin{equation*}
j_{\nu,k} g\left(\frac{\nu}{j_{\nu,k}}\right)=\frac{1}{\pi}j_{\nu,k}-\frac{1}{2}\nu+O_V\left(\frac{1}{j_{\nu,k}}\right).
\end{equation*}
Combining the above two formulas gives \eqref{appro4}.
\end{proof}

Finally we are ready to prove the following approximations of zeros $j_{\nu, k}$. The key point is that these approximations are good for essentially all (not just large) $\nu\geq 0$ and $k\in \mathbb{N}$ with relatively small errors!

Let $F: [0, \infty)\times [0, \infty)\setminus \{O\}\rightarrow \mathbb{R}$ be the function homogeneous of degree $1$ such that $F\equiv1$ on the graph of $g$. In fact, $F$ is the Minkowski functional of the domain $\mathcal{D}$ (the shaded area in Figure \ref{domainD} bounded by axes and the graph of $g$).

\begin{figure}[ht]
\centering
\includegraphics[width=0.5\textwidth]{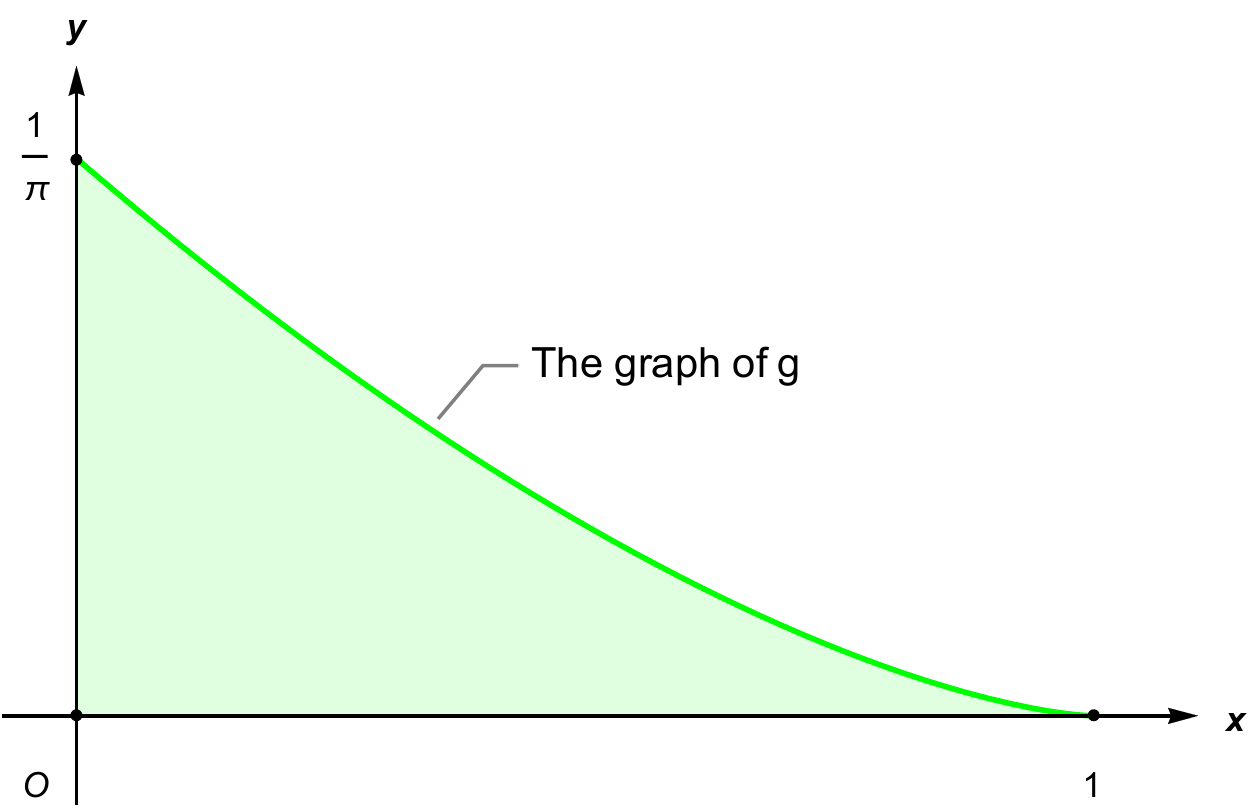}  
\caption{The domain $\mathcal{D}$.}
\label{domainD}
\end{figure}

\begin{theorem}\label{approximation}
There exists a small constant $c>0$ and a $N\in\mathbb{N}$ such that if $\nu>N$ then
\begin{equation}
j_{\nu, k}=F(\nu,k-1/4)+R_{\nu,k},\label{approximation0}
\end{equation}
where
\begin{equation}
R_{\nu,k}=\left\{
            \begin{array}{ll}
            O\left((\nu+k)^{-1}\right),                                         & \textrm{if $j_{\nu,k}\geq (1+c)\nu$,}\\
            O\left(\nu^{1/3}k^{-4/3}\right),                                   &\textrm{if $j_{\nu,k}<(1+c)\nu$.}
            \end{array}
        \right. \label{approximation1}
\end{equation}

If $0\leq \nu\leq N$ and $k$ is sufficiently large then \eqref{approximation0} holds with
\begin{equation}
R_{\nu,k}=O\left((\nu+k)^{-1}\right). \label{approximation2}
\end{equation}
\end{theorem}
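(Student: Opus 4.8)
The plan is to combine the two parameter ranges already treated in Propositions~\ref{large-nu}, \ref{small-nu} and \ref{approximation} (wait---Proposition~\ref{approximation} is what we are proving) --- i.e., to combine Propositions~\ref{large-nu} and \ref{small-nu} with the definition of $F$, and then to convert the information about $h_\nu(j_{\nu,k}) = j_{\nu,k}\,g(\nu/j_{\nu,k})$ into information about $j_{\nu,k}$ itself by inverting the relation $F\equiv 1$ on the graph of $g$. The starting observation is that since $F$ is homogeneous of degree $1$ and equals $1$ on the graph of $g$, the identity $j_{\nu,k} = F(\nu,\, j_{\nu,k} g(\nu/j_{\nu,k}))$ holds exactly, for every $\nu \ge 0$ and every $k$ with $j_{\nu,k} > \nu$ (which is always the case). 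Indeed, writing $x = j_{\nu,k}$ and $t = \nu/x \in [0,1)$, the point $(\nu/x,\, g(\nu/x)) = (t, g(t))$ lies on the graph of $g$, so $F(t, g(t)) = 1$, and homogeneity gives $F(\nu, x\, g(\nu/x)) = x\, F(t, g(t)) = x = j_{\nu,k}$. So exactly
\begin{equation*}
j_{\nu,k} = F\!\left(\nu,\, h_\nu(j_{\nu,k})\right), \qquad h_\nu(j_{\nu,k}) = j_{\nu,k}\, g\!\left(\frac{\nu}{j_{\nu,k}}\right).
\end{equation*}

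Next I would use the approximations for $h_\nu(j_{\nu,k})$ already established: Proposition~\ref{large-nu} gives $h_\nu(j_{\nu,k}) = (k - 1/4) + E$ with $E = O((\nu+k)^{-1})$ when $j_{\nu,k} \ge (1+c)\nu$, and $E = O(1/k)$ when $\nu < j_{\nu,k} < (1+c)\nu$, for $\nu$ large; Proposition~\ref{small-nu} gives the same with $E = O_V((\nu+k)^{-1})$ when $0 \le \nu \le N$ and $k$ large. Substituting into the exact identity and using that $F$ is Lipschitz in its second argument on the relevant cone (it is homogeneous of degree $1$ and the domain $\mathcal{D}$ has a boundary that is smooth and of bounded curvature away from the axes, and near the axes one has to be slightly careful but $F(\nu, s)$ as a function of $s$ for fixed $\nu$ has derivative $\partial_s F \lesssim 1$ uniformly --- this follows from $g' \to -\infty$ only at $t=1$, i.e.\ the boundary of $\mathcal D$ meets the $y$-axis at a finite nonzero slope and meets the $x$-axis tangentially, so that the outward normal never points in the $\nu$-direction except at the very corner on the $x$-axis, which corresponds to $\nu/x \to 1$, i.e.\ $j_{\nu,k}$ close to $\nu$), we get
\begin{equation*}
j_{\nu,k} = F(\nu, k - 1/4) + O\!\left(|\partial_s F| \cdot |E|\right) = F(\nu, k-1/4) + O(|E|).
\end{equation*}
This already yields \eqref{approximation0} with $R_{\nu,k} = O((\nu+k)^{-1})$ in the regime $j_{\nu,k} \ge (1+c)\nu$ (both for $\nu > N$ and for $\nu \le N$), and $R_{\nu,k} = O(1/k)$ in the regime $\nu < j_{\nu,k} < (1+c)\nu$.

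The remaining point, and the one I expect to be the main obstacle, is upgrading the bound $O(1/k)$ to $O(\nu^{1/3} k^{-4/3})$ in the near-diagonal regime $\nu < j_{\nu,k} < (1+c)\nu$. Here one cannot simply quote Proposition~\ref{large-nu}, because \eqref{appro2} only gives $O(1/k)$; instead one must go back to Lemma~\ref{lemma2} and use the finer Airy-type asymptotics \eqref{olver3}--\eqref{olver2}, rather than its sine-form corollary \eqref{olver4}. The idea is that in this regime $h_\nu(j_{\nu,k}) = j_{\nu,k} g(\nu/j_{\nu,k})$ is comparable to $k$ (as shown in the proof of Proposition~\ref{large-nu}), and the zero $j_{\nu,k}$ corresponds, via \eqref{olver3}, to the vanishing of $\mathrm{Ai}\big(-((3\pi/2) h_\nu(j_{\nu,k}))^{2/3}\big) + E_\nu(j_{\nu,k})$. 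Since the $k$th negative zero $t_k$ of $\mathrm{Ai}$ satisfies $t_k = ((3\pi/2)(k - 1/4))^{2/3}(1 + O(k^{-2}))$, and the derivative of $\mathrm{Ai}$ at $t_k$ is of size $\asymp t_k^{1/4} \asymp k^{1/6}$, a perturbation argument (implicit function theorem applied to $\mathrm{Ai}(-r) + E_\nu = 0$) shows that $(3\pi/2) h_\nu(j_{\nu,k})$ differs from $(3\pi/2)(k-1/4)$ by $O\big(|E_\nu(j_{\nu,k})| \cdot k^{-1/6}\big)$; combined with the bound $E_\nu(j_{\nu,k}) = O(\nu^{-4/3}(1 + (h_\nu(j_{\nu,k}))^{1/6})) = O(\nu^{-4/3} k^{1/6})$ from \eqref{olver2} and $h_\nu(j_{\nu,k}) \asymp k$, this gives $h_\nu(j_{\nu,k}) = (k - 1/4) + O(\nu^{-4/3})$. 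Feeding this through the exact identity $j_{\nu,k} = F(\nu, h_\nu(j_{\nu,k}))$ as before, with $\partial_s F = O(1)$, yields $R_{\nu,k} = O(\nu^{-4/3})$; and since in this regime $k \asymp h_\nu(j_{\nu,k}) \lesssim \nu$ (because $g(\nu/x) \lesssim 1$ and $x \asymp \nu$), so that $\nu^{-4/3} \lesssim \nu^{1/3} k^{-4/3}$ exactly when $k \lesssim \nu$, which holds, we obtain the claimed bound $O(\nu^{1/3} k^{-4/3})$. A small amount of care is needed for the handful of exceptional small $k$ (those with $h_\nu(j_{\nu,k}) \le C$) where one falls outside the validity of \eqref{olver4} but is still within \eqref{olver3}; there the error estimate $E_\nu = O_C(\nu^{-3/4})$ already noted in the proof of Proposition~\ref{large-nu} is more than enough, and in any case for bounded $k$ one has $\nu^{1/3}k^{-4/3} \asymp_C \nu^{1/3}$ which dominates any negative power of $\nu$.
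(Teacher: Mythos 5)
Your skeleton is the paper's: the exact identity $j_{\nu,k}=F(\nu,h_\nu(j_{\nu,k}))$ with $h_\nu(x)=xg(\nu/x)$, the input from Propositions~\ref{large-nu} and \ref{small-nu}, and the mean value theorem applied to $F$ in its second argument. But there is a genuine error at the central step: your claim that $\partial_s F\lesssim 1$ \emph{uniformly} is false, and it fails exactly in the regime that matters. Since $g(1)=0$ and $g'(1)=0$, the boundary of $\mathcal D$ meets the $x$-axis tangentially, with $g(t)\asymp (1-t)^{3/2}$ near $t=1$; inverting, $F(x,y)= x+ Cx^{1/3}y^{2/3}+\cdots$ for $0<y\ll x$, so $\partial_y F(x,y)\asymp x^{1/3}y^{-1/3}\to\infty$ as $y/x\to 0$. (Your geometric reasoning is inverted: tangency to the $x$-axis is precisely what makes $\partial_y F$ blow up there, and the regime $\nu<j_{\nu,k}<(1+c)\nu$ forces $k\lesssim_c\nu$, i.e.\ places $(\nu,k-1/4)$ in that bad cone.) This unboundedness is the entire reason the theorem's second case carries the \emph{weaker} bound $O(\nu^{1/3}k^{-4/3})=(\nu/k)^{1/3}\cdot k^{-1}\gtrsim k^{-1}$: the paper multiplies the $O(1/k)$ error of \eqref{appro2} by $\partial_y F=O(\nu^{1/3}k^{-1/3})$ to get $O(\nu^{1/3}k^{-4/3})$. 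You have read the inequality backwards — you believe $O(\nu^{1/3}k^{-4/3})$ is \emph{stronger} than $O(1/k)$ and therefore set out to "upgrade" the estimate, when in fact, had your Lipschitz claim been true, the second case would already follow trivially from \eqref{appro2}.

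The Airy-function detour you then propose is both unnecessary and flawed on its own terms: from $t_k=\bigl(\tfrac{3\pi}{2}(k-\tfrac14)\bigr)^{2/3}(1+O(k^{-2}))$ and a perturbation of size $O(|E_\nu|k^{-1/6})$ in $r=(\tfrac{3\pi}{2}h_\nu)^{2/3}$, undoing the $2/3$-power gives $h_\nu(j_{\nu,k})=(k-\tfrac14)+O(k^{-1})+O(\nu^{-4/3}k^{1/3})$, not $+O(\nu^{-4/3})$; the McMahon-type $O(k^{-1})$ correction does not disappear. To repair the proof, discard the refinement, keep \eqref{appro2} as is, and replace the uniform Lipschitz claim by the two-regime derivative bound $\partial_y F(x,y)=O(x^{1/3}y^{-1/3})$ for $0<y\leq c'x$ and $\partial_y F(x,y)=O(1)$ otherwise, choosing $c$ small enough (via the monotonicity of $g(t)/t$) that $j_{\nu,k}<(1+c)\nu$ forces $(k-1/4)/\nu<c'$. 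Your treatment of the regime $j_{\nu,k}\geq(1+c)\nu$ and of bounded $\nu$ is correct and matches the paper.
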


\begin{proof}
Applying Proposition  \ref{large-nu} with a small constant $c$ (to be determined below) and all sufficiently large $\nu$ yields that
\begin{equation*}
\frac{k-1/4+O(1)}{\nu}=\frac{g\left(\nu/j_{\nu,k}\right)}{\nu/j_{\nu,k}}+O\left(\nu^{-1}\right).
\end{equation*}
Since $g(t)/t$ is strictly decreasing, if $j_{\nu,k}<(1+c)\nu$ and $\nu$ is sufficiently large then
\begin{equation}
\frac{k-1/4+O(1)}{\nu}\leq \frac{2g\left((1+c)^{-1}\right)}{(1+c)^{-1}} \label{slope}
\end{equation}
which goes to $0$ as $c\rightarrow 0$. Observe that if $0<y\leq c'x$ with $c'$ small enough then $\partial_y F(x,y)=O(x^{1/3}y^{-1/3})$, otherwise $\partial_y F(x,y)=O(1)$. We choose $c$ so small that the right hand side of \eqref{slope} is less than $c'$. Then combining Proposition \ref{large-nu} (with Remark \ref{rm1}), properties of $F$ and the mean value theorem yields the bound in \eqref{approximation1} when $j_{\nu,k}<(1+c)\nu$. The same argument proves \eqref{approximation1} when $j_{\nu,k}\geq (1+c)\nu$ if we note that
\begin{equation*}
\frac{k-1/4+O(1)}{\nu}\geq \frac{g\left((1+c)^{-1}\right)}{2(1+c)^{-1}}
\end{equation*}
whenever $\nu$ is sufficiently large.

The desired result for ``small'' $\nu$'s follows similarly from Proposition \ref{small-nu} and the mean value theorem.
\end{proof}


\section{Spectrum counting to lattice counting} \label{sec3}

To study the eigenvalue counting function $\mathscr{N}_\mathscr{B}(\mu)$, by a simple renormalization, we may assume $\mathscr{B}=B(0,1)$ from now on. We search for bounds of
\begin{equation*}
\mathscr{R}_\mathscr{B}(\mu)=\mathscr{N}_\mathscr{B}(\mu)-2^{-d}\Gamma\left(\frac{d}{2}+1\right)^{-2} \mu^d + \frac{1}{2\cdot(d-1)!}\mu^{d-1}, \quad d\geq 3.
\end{equation*}

It can be checked by standard separation of variables that for $d\geq 3$ the spectrum of the Dirichlet Laplacian associated with $\mathscr{B}$ consists of the numbers $x_{n,k}^2$, $n\in\mathbb{Z}_{+}:=\mathbb{N}\cup\{0\}$, $k\in\mathbb{N}$, where $x_{n,k}$'s are the positive zeros of the (ultra)spherical Bessel function $x^{1-d/2}J_{n+\frac{d}{2}-1}(x)$. In another word
\begin{equation*}
x_{n,k}=j_{n+\frac{d}{2}-1,k}
\end{equation*}
to which we can apply Theorem \ref{approximation} with $\nu=n+d/2-1$. For each pair $(n,k)$ the $x_{n,k}^2$ appears $m_n^d:=\binom{n+d-1}{d-1}-\binom{n+d-3}{d-1}$ \footnote{We follow the convention that $\binom{l}{m}=0$ if $m>l$.}times in the spectrum. In particular $m_0^d=1$ and $m_1^d=d$. See Gurarie~\cite[\S4.5]{Gur:1992}.

We observe that Theorem \ref{approximation} roughly tells us that each $x_{n,k}\leq \mu$ corresponds to a point $(n+d/2-1, k-1/4)\in \mu\mathcal{D}$. We also observe that the multiplicity $m_n^d$ is strictly increasing in $n$. These motivate the following definitions. For $l\in \mathbb{Z}_+$,
\begin{equation*}
\mathscr{N}_l(\mu):=\#\{(n,k)\in \mathbb{Z}_+\times\mathbb{N} : x_{n,k}\leq \mu, n\geq l\}\footnote{The multiplicity of $x_{n,k}$ is not counted in the definition of this set.}
\end{equation*}
and
\begin{equation*}
\mathcal{N}_l(\mu):=\#\left\{ \left(n+\frac{d}{2}-1, k-\frac{1}{4}\right)\in \mu\mathcal{D} : n\in \mathbb{Z}_+, n\geq l, k\in\mathbb{N}\right\}.
\end{equation*}
Then
\begin{equation*}
\mathscr{N}_\mathscr{B}(\mu)=\sum_{l=0}^{\infty} \left(m_{l}^d-m_{l-1}^d\right)\mathscr{N}_l(\mu)
\end{equation*}
with $m_{-1}^d:=0$. Correspondingly we define
\begin{equation}
\mathcal{N}(\mu):=\sum_{l=0}^{\infty}\left(m_{l}^d-m_{l-1}^d\right)\mathcal{N}_l(\mu).\label{lattice}
\end{equation}
It is easy to observe that the summands in the above two sums are equal to zero if $l\geq \mu-\frac{d-2}{2}$, and that
\begin{align}
m_{l}^d-m_{l-1}^d&=\binom{l+d-2}{d-2}-\binom{l+d-4}{d-2},\label{bino}\\
                 &=O_d\left(l^{d-3}+1\right),\quad \textrm{if $d\geq 3$, $l\in\mathbb{Z}_+$}.\label{bino-bound}
\end{align}

\begin{lemma}
There exists a constant $C>0$ such that for $l\in \mathbb{Z}_+$
\begin{equation*}
\left|\mathscr{N}_l(\mu)-\mathcal{N}_l(\mu)\right|\leq \mathcal{N}_l(\mu+C\mu^{-3/7})-\mathcal{N}_l(\mu-C\mu^{-3/7})+O\left(\mu^{4/7}\right).
\end{equation*}
\end{lemma}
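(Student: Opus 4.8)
The plan is to fix $l\in\mathbb{Z}_+$ and compare the two counting quantities term by term over the pairs $(n,k)$ with $n\ge l$. By definition $\mathscr{N}_l(\mu)$ counts those $(n,k)$ with $j_{\nu,k}\le\mu$ (where $\nu=n+d/2-1$), while $\mathcal{N}_l(\mu)$ counts those with $F(\nu,k-1/4)\le\mu$. Theorem \ref{approximation} says $j_{\nu,k}=F(\nu,k-1/4)+R_{\nu,k}$, so a pair contributes to the symmetric difference of the two counts only when $F(\nu,k-1/4)$ lies within $|R_{\nu,k}|$ of $\mu$. Thus the first step is to bound $|R_{\nu,k}|$ uniformly: for the ``large $\nu$'' regime we have $R_{\nu,k}=O((\nu+k)^{-1})$ when $j_{\nu,k}\ge(1+c)\nu$ and $R_{\nu,k}=O(\nu^{1/3}k^{-4/3})$ otherwise, and for bounded $\nu$ with $k$ large we again have $O((\nu+k)^{-1})$. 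I would extract from these a single bound of the form $|R_{\nu,k}|=O(\mu^{-3/7})$ valid for all $(n,k)$ with $F(\nu,k-1/4)\asymp\mu$, which is where the exponent $3/7$ and the constant $C$ in the statement come from.

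Carrying this out: when $j_{\nu,k}\ge(1+c)\nu$, since $\nu+k\asymp F(\nu,k-1/4)+1$ and for the relevant pairs $F(\nu,k-1/4)\approx\mu$, we get $|R_{\nu,k}|=O(\mu^{-1})$, which is far better than needed. The binding case is $j_{\nu,k}<(1+c)\nu$, i.e. points near the $\nu$-axis end of $\mu\mathcal{D}$; there $\nu\asymp\mu$ and we need $\nu^{1/3}k^{-4/3}=O(\mu^{-3/7})$, i.e. $k\gtrsim\nu^{1/3+3\cdot(1/3+3/7)/4}$... more simply, $k\gtrsim\mu^{(1/3+3/7)/(4/3)}=\mu^{(16/21)\cdot(3/4)}=\mu^{4/7}$. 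So the bound $|R_{\nu,k}|=O(\mu^{-3/7})$ fails only for the $O(\mu)\cdot O(\mu^{4/7})\cdot(\text{wait, that overcounts})$—rather, for each such $n$ there are at most $O(\mu^{4/7})$ values of $k$ (those with $k\lesssim\mu^{4/7}$) where the bound could fail, but in fact those pairs satisfy $j_{\nu,k}<(1+c)\nu\asymp\mu$ so they are few: the number of pairs $(n,k)$ with $n\ge l$, $k\le C\mu^{4/7}$, and $F(\nu,k-1/4)\le\mu+1$ is $O(\mu^{4/7})$ in total, since $n$ ranges over $O(\mu)$ values but only... hmm—I would instead argue that the region of $\mu\mathcal{D}$ with $y$-coordinate $\le C\mu^{4/7}$ has area $O(\mu\cdot\mu^{4/7})=O(\mu^{1+4/7})$, which is too big. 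The correct accounting: the ``bad'' pairs are those with $j_{\nu,k}<(1+c)\nu$ AND $k\le C\mu^{4/7}$; since $g(\nu/x)$ is tiny there, $F(\nu,k-1/4)$ being $\le\mu$ with $k$ small forces $\nu\le\mu$, and for each fixed $k\le C\mu^{4/7}$ there is an interval of admissible $\nu$ of length $O(1)$ near where $F(\nu,k-1/4)=\mu$—no, $F$ is not that steep. I would therefore handle the bad pairs directly as an $O(\mu^{4/7})$ error: one shows the number of $(n,k)$ with $j_{\nu,k}\le\mu+1$ and $k\le C\mu^{4/7}$ is $O(\mu^{4/7})$ because for fixed $k$, $g(\nu/j_{\nu,k})=(k-1/4+o(1))/j_{\nu,k}$ by the propositions forces $j_{\nu,k}\asymp(\nu k)^{?}$... this is exactly the delicate part.

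For the pairs where $|R_{\nu,k}|=O(\mu^{-3/7})$ does hold, $j_{\nu,k}\le\mu$ implies $F(\nu,k-1/4)\le\mu+C\mu^{-3/7}$, so such a pair is counted in $\mathcal{N}_l(\mu+C\mu^{-3/7})$; conversely $F(\nu,k-1/4)\le\mu$ implies $j_{\nu,k}\le\mu+C\mu^{-3/7}$, but I need a lower direction: if $j_{\nu,k}>\mu$ then $F(\nu,k-1/4)>\mu-C\mu^{-3/7}$, so pairs counted by $\mathscr{N}_l(\mu)$ but contributing to the discrepancy lie in $\mu\mathcal{D}_+\setminus(\mu-C\mu^{-3/7})\mathcal{D}$. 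Putting these inclusions together yields $|\mathscr{N}_l(\mu)-\mathcal{N}_l(\mu)|\le\big(\mathcal{N}_l(\mu+C\mu^{-3/7})-\mathcal{N}_l(\mu-C\mu^{-3/7})\big)+(\text{number of bad pairs})$, and the bad pairs contribute $O(\mu^{4/7})$.

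The main obstacle is the uniform control of $R_{\nu,k}$ near the $\nu$-axis portion of $\mu\mathcal{D}$ (small $k$, large $\nu$), where the error term $\nu^{1/3}k^{-4/3}$ in Theorem \ref{approximation} is not $O(\mu^{-3/7})$; the resolution is to absorb the finitely-many-per-$n$ offending pairs into the explicit $O(\mu^{4/7})$ term, and the whole point of choosing the exponent $3/7$ is that it is exactly the threshold at which the count of these exceptional lattice points matches the error budget. Verifying that this count really is $O(\mu^{4/7})$—using Proposition \ref{large-nu} to locate $j_{\nu,k}$ for each small $k$—is the one genuinely non-routine estimate.
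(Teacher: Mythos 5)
Your overall framework is the same as the paper's (compare $j_{\nu,k}$ with $F(\nu,k-1/4)$ via Theorem \ref{approximation}, note that $|R_{\nu,k}|=O(\mu^{-3/7})$ can fail only when $j_{\nu,k}<(1+c)\nu$ and $k\lesssim\mu^{4/7}$, and absorb those pairs into the $O(\mu^{4/7})$ term), but the one estimate you yourself flag as ``genuinely non-routine'' --- that the exceptional pairs number $O(\mu^{4/7})$ --- is left unproved, and the route you explicitly reject is in fact the correct one. You write that for fixed small $k$ the set of admissible $\nu$ near the level set $F(\nu,k-1/4)=\mu$ cannot have length $O(1)$ because ``$F$ is not that steep.'' It is: in the region $0<y\leq cx$ one has $\partial_xF(x,y)\asymp_c 1$ (it is $\partial_yF$ that degenerates there, like $x^{1/3}y^{-1/3}$, not $\partial_xF$). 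This is precisely what the paper uses. Slicing by $k$ and applying the mean value theorem in the $\nu$-variable, the number of $n$ with $F(\nu,k-1/4)\in[\mu-|R_{\nu,k}|,\mu+|R_{\nu,k}|]$ is $O(|R_{\nu,k}|+1)=O(\mu^{1/3}k^{-4/3}+1)$ (using $(\nu+k)^{-1}\asymp\mu^{-1}$ so that $R_{\nu,k}=O(\mu^{1/3}k^{-4/3})$ in all cases). Summing gives $O(\mu^{1/3})$ over $1\leq k\leq\mu^{1/4}$ and $O(1)$ per $k$ over $\mu^{1/4}<k\leq\mu^{4/7}$, hence $O(\mu^{4/7})$ in total; for $k>\mu^{4/7}$ the discrepancy is sandwiched between the two dilated counts as you describe.

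Note also that your heuristic picture of the exceptional set is slightly off: it is not ``$O(1)$ bad pairs per value of $k$'' uniformly. For $k\leq\mu^{1/4}$ the per-slice count can be as large as $\mu^{1/3}k^{-4/3}$ (about $\mu^{1/3}$ when $k=1$); it is only the convergence of $\sum_k k^{-4/3}$ that keeps the total at $O(\mu^{1/3})$. Your alternative attempts (counting by area of the strip $y\lesssim\mu^{4/7}$, or trying to invert the relation $g(\nu/j_{\nu,k})=(k-1/4)/j_{\nu,k}$ for fixed $k$) either overcount or are abandoned, so as written the proof does not close. With the derivative bound $\partial_xF\asymp 1$ restored, the rest of your argument goes through and coincides with the paper's proof.
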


\begin{proof}
In the following argument $\max\{n,k\}$ is implicitly assumed to be sufficiently large. This treatment will add an $O(1)$ error to the desired inequality. Define $\nu=n+d/2-1$ for short and for $k\in\mathbb{N}$
\begin{equation}
\mathscr{N}_{l,k}(\mu):=\#\{n\in \mathbb{Z}_+ : j_{\nu,k}\leq \mu, n\geq l\}\label{countfcn1}
\end{equation}
and
\begin{equation}
\mathcal{N}_{l,k}(\mu):=\#\left\{ n\in \mathbb{Z}_+ : \left(\nu, k-1/4\right)\in \mu\mathcal{D}, n\geq l\right\}.\label{countfcn2}
\end{equation}
Hence $\mathscr{N}_l(\mu)$ and $\mathcal{N}_l(\mu)$ are sums of \eqref{countfcn1} and \eqref{countfcn2} respectively over integer $1\leq k<\mu$. Using Theorem \ref{approximation} and properties of $F$ yields
\begin{align}
&\left|\mathscr{N}_{l,k}(\mu)-\mathcal{N}_{l,k}(\mu) \right|\leq \nonumber\\
&\quad \#\left\{n\in\mathbb{Z}_+ : \mu-|R_{\nu,k}|\leq F(\nu,k-1/4)\leq\mu+|R_{\nu,k}|, n\geq l \right\}.\label{set}
\end{align}

We have the following bounds, summing which over $k$ yields the desired inequality. If $1\leq k\leq \mu^{1/4}$ then
\begin{equation*}
\left|\mathscr{N}_{l,k}(\mu)-\mathcal{N}_{l,k}(\mu) \right|\lesssim \mu^{1/3}k^{-4/3};
\end{equation*}
if $\mu^{1/4}<k\leq \mu^{4/7}$ then
\begin{equation*}
\left|\mathscr{N}_{l,k}(\mu)-\mathcal{N}_{l,k}(\mu) \right|\lesssim 1;
\end{equation*}
if $\mu^{4/7}<k<\mu$ there exists a constant $C>0$ such that
\begin{equation*}
\left|\mathscr{N}_{l,k}(\mu)-\mathcal{N}_{l,k}(\mu) \right|\leq \mathcal{N}_{l,k}(\mu+C\mu^{-3/7})-\mathcal{N}_{l,k}(\mu-C\mu^{-3/7}).
\end{equation*}
We prove these bounds by estimating the size of \eqref{set}. Note that the definitions of \eqref{countfcn1} and \eqref{countfcn2} ensure that we only need to consider $\nu\leq \mu$. Hence $F(\nu,k-1/4)=\mu+O(\mu^{1/3})$, which implies $|(\nu,k-1/4)|\asymp \mu$. Therefore $(\nu+k)^{-1}\asymp \mu^{-1}$ and we always have $R_{\nu,k}=O(\mu^{1/3}k^{-4/3})$. If $1\leq k\leq \mu^{4/7}$ the bounds follow from the mean value theorem and the fact $\partial_x F(x,y)\asymp_c 1$ if $0<y\leq cx$ for any $c>0$. If $\mu^{4/7}<k<\mu$ then $R_{\nu,k}=O(\mu^{-3/7})$ and the last bound follows readily.
\end{proof}

Collecting the above definitions, observations and estimates easily yields
\begin{theorem}\label{comparison}
There exists a constant $C>0$ such that
\begin{equation*}
\left|\mathscr{N}_\mathscr{B}(\mu)-\mathcal{N}(\mu)\right|\leq  \mathcal{N}(\mu+C\mu^{-3/7})-\mathcal{N}(\mu-C\mu^{-3/7})+O\left(\mu^{d-2+4/7}\right).
\end{equation*}
\end{theorem}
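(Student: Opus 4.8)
The plan is to deduce Theorem \ref{comparison} directly from the preceding lemma by summing over the index $l$ with the weights $m_l^d-m_{l-1}^d$, so the argument is essentially bookkeeping that tracks how the various error terms accumulate. First I would recall the decomposition
\begin{equation*}
\mathscr{N}_\mathscr{B}(\mu)-\mathcal{N}(\mu)=\sum_{l=0}^{\infty}\left(m_l^d-m_{l-1}^d\right)\left(\mathscr{N}_l(\mu)-\mathcal{N}_l(\mu)\right),
\end{equation*}
which is a finite sum because all summands vanish once $l\geq \mu-\tfrac{d-2}{2}$, and apply the triangle inequality together with the lemma to each term. This gives
\begin{equation*}
\left|\mathscr{N}_\mathscr{B}(\mu)-\mathcal{N}(\mu)\right|\leq \sum_{l}\left(m_l^d-m_{l-1}^d\right)\left(\mathcal{N}_l(\mu+C\mu^{-3/7})-\mathcal{N}_l(\mu-C\mu^{-3/7})\right)+\sum_{l}\left(m_l^d-m_{l-1}^d\right)O\left(\mu^{4/7}\right).
\end{equation*}

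Next I would identify the first sum on the right with $\mathcal{N}(\mu+C\mu^{-3/7})-\mathcal{N}(\mu-C\mu^{-3/7})$ by the definition \eqref{lattice} of $\mathcal{N}$; this is immediate since $\mathcal{N}(\mu)=\sum_l (m_l^d-m_{l-1}^d)\mathcal{N}_l(\mu)$ and the monotonicity of $\mathcal{N}_l$ in $\mu$ makes the difference nonnegative, so no absolute values are lost. For the second sum I would use the bound \eqref{bino-bound}, namely $m_l^d-m_{l-1}^d=O_d(l^{d-3}+1)$, restricted to the effective range $0\leq l\lesssim \mu$. Summing, $\sum_{l\lesssim \mu}(l^{d-3}+1)=O_d(\mu^{d-2})$ for $d\geq 3$, so the accumulated error is $O_d(\mu^{d-2}\cdot\mu^{4/7})=O(\mu^{d-2+4/7})$, which is exactly the claimed error term.

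There is essentially no hard step here; the only point requiring a little care is making sure the constant $C$ produced in the lemma can be used uniformly for all $l$, which it can, since the lemma's constant does not depend on $l$. A second minor point is that the ``$\max\{n,k\}$ sufficiently large'' convention in the lemma's proof contributes at most an $O(1)$ error per value of $l$, hence a total of $O(\mu)=O(\mu^{d-2+4/7})$ (for $d\geq 3$ this is absorbed), so it does not affect the statement. I would close by remarking that this reduction is precisely the promised decomposition of the weighted lattice counting problem into the finitely many unweighted ones, with $\mathcal{N}_l(\mu)$ itself being the lattice point count for the dilated domain $\mu\mathcal{D}$ intersected with the half-plane $\{n\geq l\}$, to be estimated in Section \ref{sec4}.
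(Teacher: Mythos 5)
Your proposal is correct and follows exactly the route the paper intends: the paper itself dispatches Theorem \ref{comparison} with ``collecting the above definitions, observations and estimates easily yields,'' and your bookkeeping — summing the lemma over $l$ with weights $m_l^d-m_{l-1}^d$, identifying the first sum via \eqref{lattice}, and bounding the error sum by \eqref{bino-bound} to get $O(\mu^{d-2}\cdot\mu^{4/7})$ — is precisely the omitted computation. The points you flag (uniformity of $C$ in $l$, nonnegativity of the differences $\mathcal{N}_l(\mu+C\mu^{-3/7})-\mathcal{N}_l(\mu-C\mu^{-3/7})$) are the right ones to check and are handled correctly.
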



\section{Proof of The Main Theorem} \label{sec4}

Theorem \ref{ball-thm} follows immediately from Theorem \ref{comparison} and \ref{thm4}. To prove Theorem \ref{thm4} we need the following result on counting lattice points in subsets of the enlarged domain $\mu\mathcal{D}$.

\begin{lemma}\label{lattice-number}
For integer $0\leq l<\mu-d/2+1$, let $(\mu\mathcal{D})_l=(\mu\mathcal{D})\cap \{(x,y)\in\mathbb{R}^2 : x\geq l+\frac{d-3}{2} \}$ be a subset of $\mu\mathcal{D}$. Then
\begin{equation}
\mathcal{N}_l(\mu)=\vol\left((\mu\mathcal{D})_l\right)-\frac{1}{4}\left(\mu-l-\frac{d-3}{2}\right)+
O\left(\mu^{\frac{131}{208}}(\log \mu)^{\frac{18627}{8320}}\right). \label{111}
\end{equation}
\end{lemma}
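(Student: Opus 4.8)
The plan is to reduce the count $\mathcal{N}_l(\mu)$ of lattice points $(n+\tfrac d2-1,\,k-\tfrac14)$ lying in the region $(\mu\mathcal{D})_l$ to a classical lattice-point problem for a planar domain with smooth, positively curved boundary arc. First I would set up the geometry: the boundary of $\mu\mathcal{D}$ consists of two segments on the coordinate axes together with the dilated curve $\mu\cdot(\text{graph of }g)$, and restricting to $x\ge l+\tfrac{d-3}{2}$ cuts off a vertical strip. Shifting coordinates by $(\tfrac d2-1,-\tfrac14)$ so that the lattice becomes $\mathbb{Z}^2$, the region $(\mu\mathcal{D})_l$ becomes a domain $\Omega$ bounded below by the line $y=-\tfrac14$ (contributing the linear term $-\tfrac14(\mu-l-\tfrac{d-3}{2})$ in \eqref{111}, which is exactly the area of the thin rectangular sliver between $y=0$ and $y=-\tfrac14$ over the relevant $x$-range), on the left by the vertical line $x=l+\tfrac{d-3}{2}-(\tfrac d2-1)=l-\tfrac12$, and on the upper right by the scaled curve $\mu g(\cdot/\mu)$.

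Next I would invoke the standard lattice-point counting machinery for a convex domain whose boundary is piecewise smooth with nonvanishing curvature on the curved part. The curve $g$ is real-analytic on $[0,1)$ with strictly positive curvature (one checks $g'(t)=-\tfrac1\pi\arccos t$, so $g''(t)=\tfrac{1}{\pi\sqrt{1-t^2}}>0$ and the curvature of the graph is bounded below on compact subsets), and near the endpoints $t=0$ and $t=1$ the curve meets the axes transversally or with a controlled (square-root type) singularity that is harmless for the exponential-sum estimates. One then writes $\mathcal{N}_l(\mu)=\vol(\Omega)+(\text{boundary contributions from the straight edges})+E(\mu)$, where $E(\mu)$ is the lattice-point discrepancy attached to the curved arc. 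The straight edges lying along lattice lines contribute precisely the half-integer/quarter corrections, which after bookkeeping collapse to the single term $-\tfrac14(\mu-l-\tfrac{d-3}{2})$ displayed in the lemma; the segments on the coordinate axes and their endpoints contribute only $O(1)$.

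For the curved-arc discrepancy $E(\mu)$ I would apply the best available exponent-pair / Bombieri–Iwaniec–Huxley–type bound for the number of integer points near a smooth convex arc of ``length'' $\asymp\mu$ with curvature $\asymp\mu^{-1}$; this is exactly the mechanism that produces $O(\mu^{131/208}(\log\mu)^{18627/8320})$, the same exponent appearing in the two-dimensional disk/annulus results of \cite{GMWW:2019} that the paper is built on. Concretely, one localizes the arc into dyadic pieces on which $t=\nu/x$ ranges over a subinterval bounded away from $1$ (the piece near $t=1$, i.e.\ near the line $y=x\cdot(\text{const})$, where $g$ degenerates, is a short arc of length $O(\mu^{1/3})$ or so and is estimated trivially), applies the van der Corput / Huxley exponential-sum bound uniformly in the parameter $l$, and sums the dyadic contributions. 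Since the bound is uniform in $l$, it transfers directly to $(\mu\mathcal{D})_l$ for every admissible $l$.

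The main obstacle I anticipate is the uniformity in $l$ together with the behavior of the arc near its endpoints. When $l$ is comparable to $\mu$ the domain $(\mu\mathcal{D})_l$ is a thin lens-shaped sliver near the corner of $\mu\mathcal{D}$ on the line of slope $g'(1)$, and one must check that the lattice-point estimate does not degrade there; the resolution is that in that regime the arc is genuinely short, the area is correspondingly small, and one can afford the trivial bound $O(\text{length}+1)$, which stays within $O(\mu^{131/208})$. Likewise near $t=0$ the curve has a mild vertical-tangent behavior ($g'(0)=-\tfrac12$, actually finite, so in fact this end is benign) that must be matched against the chosen coordinates. Once these endpoint regimes are dispatched, the interior of the arc is a routine (if technically heavy) application of the cited exponential-sum bound, and assembling area term, linear term, and $O(\mu^{131/208}(\log\mu)^{18627/8320})$ discrepancy term yields \eqref{111}.
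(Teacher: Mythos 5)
Your proposal follows essentially the same route as the paper's (sketched) proof: express the count as the area plus an explicit correction coming from the quarter-offset lattice and the straight edges, and control the remaining rounding-error sum by Huxley's bound after a dyadic decomposition that isolates the degenerate end of the arc near $t=1$, uniformly in $l$. The one quantitative imprecision is that the trivially-treated degenerate piece is not an arc of length $O(\mu^{1/3})$; the paper instead bounds it row by row over the $O(\mu^{131/208})$ lattice ordinates $k\le \mu^{131/208}$ (each contributing $O(1)$), which is the form of the trivial estimate that actually stays within the claimed error, but this does not change the method.
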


\begin{remark}
This result is already (essentially) obtained in \cite[Section 4]{GMWW:2019} by using M.N. Huxley's deep results in \cite{Huxley:2003} on bounds for rounding error sums. In fact \cite[Theorem 4.1]{GMWW:2019} deals with the lattice counting in a more complicated domain. Our domain $\mathcal{D}$ here is simply a special case of it (with $r=0$ and $R=1$ there). To prove this lemma, we just need to repeat the computation of $\mathcal{N}_{\mathcal{D}_1}^{u}(\mu)$ in the proof of \cite[Theorem 4.1]{GMWW:2019}. We give a sketch in the appendix.

We observe that the bound $O(\mu^{131/208}(\log \mu)^{18627/8320})$ may be improved for large $l$'s. This is particularly clear as $l\rightarrow \mu-d/2+1$ when the two main terms in \eqref{111} are much smaller than the error. However, since we are not able to essentially improve the bound for (say) $l<\mu/2$, improvements for large $l$'s do not lead to a better remainder estimate in the following theorem.  Hence we are satisfied with the current form of \eqref{111}.
\end{remark}

\begin{theorem}\label{thm4}
\begin{equation*}
\mathcal{N}(\mu)=2^{-d}\Gamma\left(\frac{d}{2}+1\right)^{-2} \mu^d-\frac{1}{2\cdot(d-1)!}\mu^{d-1}+O\left( \mu^{d-2+\frac{131}{208}}(\log \mu)^{\frac{18627}{8320}}\right).
\end{equation*}
\end{theorem}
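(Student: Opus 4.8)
\textbf{Proof plan for Theorem \ref{thm4}.}
The plan is to use the definition \eqref{lattice} of $\mathcal{N}(\mu)$ as a weighted sum of the truncated lattice counts $\mathcal{N}_l(\mu)$, plug in the asymptotic formula \eqref{111} for each $\mathcal{N}_l(\mu)$, and then reassemble the three resulting pieces (a volume sum, a linear sum, and an error sum) into the stated main terms plus error. First I would record that the summation range is finite: by the remark after \eqref{lattice} the summand vanishes once $l\geq \mu-\tfrac{d-2}{2}$, so there are $O(\mu)$ values of $l$, and by \eqref{bino-bound} the weights satisfy $m_l^d-m_{l-1}^d=O_d(l^{d-3}+1)$. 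Hence the accumulated error term is
\begin{equation*}
\sum_{0\leq l<\mu-d/2+1}\left(m_l^d-m_{l-1}^d\right)\,O\!\left(\mu^{\frac{131}{208}}(\log\mu)^{\frac{18627}{8320}}\right)
= O\!\left(\mu^{\frac{131}{208}}(\log\mu)^{\frac{18627}{8320}}\sum_{l<\mu}(l^{d-3}+1)\right),
\end{equation*}
and since $\sum_{l<\mu}(l^{d-3}+1)=O_d(\mu^{d-2})$, this contributes exactly $O(\mu^{d-2+131/208}(\log\mu)^{18627/8320})$, which is the claimed remainder. So the whole game is to show the two main-term sums produce $2^{-d}\Gamma(\tfrac d2+1)^{-2}\mu^d-\tfrac{1}{2(d-1)!}\mu^{d-1}$ up to an error absorbed in the remainder.

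Next I would handle the volume sum $\sum_l (m_l^d-m_{l-1}^d)\vol((\mu\mathcal{D})_l)$. Using \eqref{bino} I would write $m_l^d-m_{l-1}^d=\binom{l+d-2}{d-2}-\binom{l+d-4}{d-2}$ and interpret the half-plane cutoffs $\{x\geq l+\tfrac{d-3}{2}\}$ as a telescoping/Abel-summation device: summing $\vol((\mu\mathcal{D})_l)$ against these weights should reconstruct $\int_{\mu\mathcal{D}} w(x)\,dx\,dy$ for an appropriate polynomial weight $w$ of degree $d-2$ that represents the multiplicity $m_n^d$ as a function of the continuous variable $x=n+\tfrac{d}{2}-1$. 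Concretely, $m_n^d$ is (for $n\geq 0$) a polynomial in $n$ of degree $d-2$ with leading coefficient $2/(d-2)!$; so there is a polynomial $P_d$ with $P_d(x)$ agreeing with $m_n^d$ at $x=n+\tfrac d2-1$, and the weighted volume sum equals $\iint_{\mu\mathcal{D}}P_d(x)\,dx\,dy$ up to lower-order boundary corrections. Scaling $x=\mu u$, $y=\mu v$ turns this into $\mu^{d}\iint_{\mathcal{D}}P_d(\mu u)\mu^{-(d-2)}\,du\,dv + \cdots$; extracting the top-degree part of $P_d$ gives $\tfrac{2}{(d-2)!}\mu^d\iint_{\mathcal{D}}u^{d-2}\,du\,dv$. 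Using the change of variables $u=t\,F^{-1}$-style parametrization of $\mathcal{D}$ via the graph of $g$ — i.e. $\mathcal{D}=\{(u,v):0\leq v\leq g(u/\rho)\rho \text{ scaled appropriately}\}$ — the double integral $\iint_{\mathcal{D}}u^{d-2}\,du\,dv$ reduces to $\int_0^1 (\text{something})\,g(t)\,dt$ type integrals, which with $g(t)=(\sqrt{1-t^2}-t\arccos t)/\pi$ evaluate via standard Beta-function identities; I expect the arithmetic to collapse to $2^{-d}\Gamma(\tfrac d2+1)^{-2}$ after using $\Gamma(\tfrac d2+1)^2=\Gamma(d+1)\cdot(\text{duplication-formula factor})$, matching the Weyl volume constant $(2\pi)^{-d}\omega_d\vol(\mathscr{B})$ specialized to the unit ball. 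The next-order term of $P_d$ together with the lower-order boundary discrepancies in the Abel summation should assemble into $-\tfrac{1}{2(d-1)!}\mu^{d-1}$.

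Finally I would deal with the linear sum $-\tfrac14\sum_l(m_l^d-m_{l-1}^d)(\mu-l-\tfrac{d-3}{2})$. Here $\mu-l-\tfrac{d-3}{2}$ is linear in $l$ and the weights are $O(l^{d-3}+1)$ over a range of length $O(\mu)$, so crude bounds give $O(\mu^{d-1})$ — i.e. this term is a priori of the order of the second Weyl term and must be computed exactly, not merely bounded. By the same Abel-summation/telescoping identity as above, $\sum_{l<\mu-d/2+1}(m_l^d-m_{l-1}^d)(\mu-l-\tfrac{d-3}{2})$ is essentially $\sum_{l}m_l^d$ (a single summation by parts), and $\sum_{n=0}^{N}m_n^d=\binom{N+d-1}{d-1}+\binom{N+d-2}{d-1}$ (the partial sums telescope since $m_n^d=\binom{n+d-1}{d-1}-\binom{n+d-3}{d-1}$), evaluated at $N\approx\mu-d/2$; this is $\tfrac{2}{(d-1)!}\mu^{d-1}+O(\mu^{d-2})$, so the linear sum contributes $-\tfrac14\cdot\tfrac{2}{(d-1)!}\mu^{d-1}+O(\mu^{d-2})=-\tfrac{1}{2(d-1)!}\mu^{d-1}+O(\mu^{d-2})$, exactly the second Weyl term, and its $O(\mu^{d-2})$ slack is comfortably inside the remainder. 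I would then combine: the volume sum gives the $\mu^d$ term (and a piece of, or a cancellation against, the $\mu^{d-1}$ term), the linear sum gives the $-\tfrac{1}{2(d-1)!}\mu^{d-1}$ term, and the error sum gives $O(\mu^{d-2+131/208}(\log\mu)^{18627/8320})$; any residual $O(\mu^{d-2})$ discrepancies are absorbed since $d-2<d-2+\tfrac{131}{208}$. The step I expect to be the genuine obstacle is the bookkeeping in the volume sum: carefully matching the discrete weighted sum $\sum_l(m_l^d-m_{l-1}^d)\vol((\mu\mathcal{D})_l)$ to a clean continuous integral $\iint_{\mu\mathcal{D}}P_d(x)\,dx\,dy$ with an error that is truly $O(\mu^{d-2})$ (the boundary of $\mathcal{D}$ near the $x$-axis is where $\partial_yF$ blows up like $x^{1/3}y^{-1/3}$, so one must check this region contributes acceptably), and then verifying that the resulting Beta-integral arithmetic reproduces precisely the constants $2^{-d}\Gamma(\tfrac d2+1)^{-2}$ and $-\tfrac{1}{2(d-1)!}$ rather than something off by a dimensional factor. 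Everything else is routine Abel summation and binomial identities.
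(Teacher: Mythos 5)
Your overall route is the same as the paper's: insert the asymptotics of Lemma \ref{lattice-number} into \eqref{lattice} and evaluate the three resulting sums separately. Two of the three pieces you handle correctly and essentially as the paper does. The error sum is disposed of exactly as you say, via \eqref{bino-bound} and $\sum_{l<\mu}(l^{d-3}+1)=O(\mu^{d-2})$. For the linear sum the paper likewise uses \eqref{bino} and summation by parts, arriving at $-\tfrac12\binom{\lfloor\mu-\frac{d+2}{2}\rfloor+d-1}{d-1}+O(\mu^{d-2})=-\tfrac{\mu^{d-1}}{2(d-1)!}+O(\mu^{d-2})$; your computation via $\sum_{n\le N}m_n^d=\binom{N+d-1}{d-1}+\binom{N+d-2}{d-1}$ is the same argument and gives the same answer.

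The genuine gap is in the volume sum, and it is sharper than the ``bookkeeping'' you flag: as written, your argument only determines $\sum_l(m_l^d-m_{l-1}^d)\vol((\mu\mathcal{D})_l)$ up to $O(\mu^{d-1})$, and you explicitly leave open whether it contributes ``a piece of, or a cancellation against'' the $\mu^{d-1}$ term. It must contribute nothing at that order: since the linear sum already yields the entire $-\tfrac{1}{2(d-1)!}\mu^{d-1}$, any $\mu^{d-1}$ contribution from the volume sum would falsify the theorem. Your proposed replacement of the weight by the leading monomial of $P_d$ discards a degree-$(d-4)$ piece, which against $\vol((\mu\mathcal{D})_l)=O(\mu^2)$ over $O(\mu)$ values of $l$ is precisely an uncontrolled $O(\mu^{d-1})$. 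The paper closes this with the exact identity $m_l^d-m_{l-1}^d=\tfrac{2}{(d-3)!}\left(l+\tfrac{d-3}{2}\right)^{d-3}+R_l$, where $R_l=0$ for $d=3,4,5$ and $R_l=O(l^{d-5})$ for $d\ge6$: the polynomial is centered at exactly the abscissa $l+\tfrac{d-3}{2}$ that defines the cutoff of $(\mu\mathcal{D})_l$, so the would-be $l^{d-4}$ correction vanishes identically and $R_l$ contributes only $O(\mu^{d-2})$. One must then still convert the resulting sum $\sum_l f(l)$, with $f(x)=\tfrac{2}{(d-3)!}(x+\tfrac{d-3}{2})^{d-3}\int_{x+(d-3)/2}^{\mu}\mu g(t/\mu)\,\mathrm{d}t$, into $\int f$ with error $O(\mu^{d-2})$; the paper does this by Euler--Maclaurin, bounding the correction $\int\psi(t)f'(t)\,\mathrm{d}t$ by the second mean value theorem using (piecewise) monotonicity of $f'$ -- this also takes care of your concern about the boundary of $\mathcal{D}$. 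The constant evaluation $\tfrac{2}{(d-2)!}\int_0^1t^{d-2}g(t)\,\mathrm{d}t=2^{-d}\Gamma(\tfrac d2+1)^{-2}$ then follows from two integrations by parts (using $g''(t)=(\pi\sqrt{1-t^2})^{-1}$), the beta function, and the Legendre duplication formula, as you anticipated. So the plan is right, but the volume-sum step needs the exact centered form of the weight to rule out a spurious $\mu^{d-1}$ term.
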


\begin{proof}
By \eqref{lattice} and Lemma \ref{lattice-number}, we split $\mathcal{N}(\mu)$ into three parts and evaluate them one by one.

First,
\begin{equation}
\sum_{0\leq l\leq \mu-\frac{d-2}{2}} \left(m_{l}^d-m_{l-1}^d\right)\vol\left((\mu\mathcal{D})_l\right)=2^{-d}\Gamma\left(\frac{d}{2}+1\right)^{-2} \mu^d+O\left(\mu^{d-2} \right). \label{mainterm}
\end{equation}
To verify this asymptotics we apply to the left side that for $l\geq 1$
\begin{equation*}
m_{l}^d-m_{l-1}^d=\frac{2}{(d-3)!}\left(l+\frac{d-3}{2}\right)^{d-3}+R_l,
\end{equation*}
where $R_l=0$ if $d=3, 4, 5$ and $R_l=O(l^{d-5})$ if $d\geq 6$, and that $m_{1}^d-m_{-1}^d=1$. Hence the left side
of \eqref{mainterm} is equal to
\begin{equation}
\vol\left((\mu\mathcal{D})_0\right)+\sum_{0<l\leq \lfloor\mu-\frac{d-2}{2}\rfloor}f(l)+\sum_{0< l\leq \lfloor\mu-\frac{d-2}{2}\rfloor} R_l\vol\left((\mu\mathcal{D})_l\right), \label{mainterm1}
\end{equation}
where
\begin{equation*}
f(x)=\frac{2}{(d-3)!}\left(x+\frac{d-3}{2}\right)^{d-3}\int_{x+\frac{d-3}{2}}^{\mu} \mu g\left(\frac{t}{\mu} \right)  \, \textrm{d}t,\quad \textrm{if $d\geq 4$},
\end{equation*}
and
\begin{equation*}
f(x)=2\int_{x}^{\mu} \mu g\left(\frac{t}{\mu} \right)  \, \textrm{d}t,\quad\textrm{if $d=3$}.
\end{equation*}
It is easy to get that $\vol((\mu\mathcal{D})_0)=|\mathcal{D}|\mu^2+O(\mu)$ and the third term in \eqref{mainterm1} is at most $O(\mu^{d-2})$. An application of the Euler-Maclaurin summation formula yields that the second term in \eqref{mainterm1} is equal to
\begin{equation}
\int_{0}^{\lfloor\mu-\frac{d-2}{2}\rfloor} f(t) \, \textrm{d}t+\int_{0}^{\lfloor\mu-\frac{d-2}{2}\rfloor} \psi(t)f'(t) \, \textrm{d}t+\frac{1}{2}f\left(\lfloor\mu-\frac{d-2}{2}\rfloor\right)-\frac{1}{2}f(0),\label{mainterm2}
\end{equation}
where $\psi(t)=t-\lfloor t\rfloor-1/2$. By changing variables it is not hard to get
\begin{align*}
\int_{0}^{\lfloor\mu-\frac{d-2}{2}\rfloor} f(t) \, \textrm{d}t&=\frac{2}{(d-2)!}\int_{0}^{1} t^{d-2}g(t)\, \textrm{d}t\mu^d+O\left(\mu^{d-2} \right) \\
     &=2^{-d}\Gamma\left(\frac{d}{2}+1\right)^{-2} \mu^d+O\left(\mu^{d-2} \right),
\end{align*}
where in the last equality we have used integration by parts twice and properties of the beta and the gamma functions. Since $f'$ is monotone (if $d=3$) or a linear combination of two (piecewise) monotone functions, by the second mean value theorem the second term in \eqref{mainterm2} is $O(\mu^{d-2})$. It is also easy to observe that the third term in \eqref{mainterm2} is $O(\mu^{d-2})$ and the last term in \eqref{mainterm2} is of size $O(\mu^2)$ if $d\geq 4$ and equal to $-|\mathcal{D}|\mu^2$ if $d=3$. Collecting all the above information yields
\eqref{mainterm}.

Second,
\begin{equation*}
-\frac{1}{4}\sum_{0\leq l\leq \mu-\frac{d-2}{2}}\left(m_{l}^d-m_{l-1}^d\right)\left(\mu-l-\frac{d-3}{2}\right)=-\frac{\mu^{d-1}}{2(d-1)!}+O\left( \mu^{d-2}\right).
\end{equation*}
Indeed, using \eqref{bino} and summation by parts yields that the left side is equal to
\begin{equation*}
-\frac{1}{2}\sum_{0\leq l\leq \mu-\frac{d+2}{2}}\binom{l+d-2}{d-2}+O\left( \mu^{d-2}\right)
\end{equation*}
which is simply
\begin{equation*}
-\frac{1}{2}\binom{\lfloor\mu-\frac{d+2}{2}\rfloor+d-1}{d-1}+O\left( \mu^{d-2}\right)=-\frac{\mu^{d-1}}{2(d-1)!}+O\left( \mu^{d-2}\right).
\end{equation*}

Third, by \eqref{bino-bound} the remainder estimate of $\mathcal{N}_l(\mu)$ leads to the desired one of $\mathcal{N}(\mu)$.
\end{proof}


\appendix

\section{Olver's asymptotic expansions of Bessel functions}

Here is one of Olver's uniform asymptotic expansions of Bessel functions of large order (see \cite[p.368]{abram:1972}):
\begin{equation}
J_{\nu}(\nu z)\sim \left(\frac{4\zeta}{1-z^2}\right)^{1/4}\left(\frac{\mathrm{Ai}(\nu^{2/3}\zeta)}{\nu^{1/3}}
\sum_{k=0}^{\infty}\frac{a_k(\zeta)}{\nu^{2k}}+\frac{\mathrm{Ai}'(\nu^{2/3}\zeta)}{\nu^{5/3}}
\sum_{k=0}^{\infty}\frac{b_k(\zeta)}{\nu^{2k}}\right)\label{olver1}
\end{equation}
as real $\nu\rightarrow \infty$, uniformly for $z\in (0, \infty)$, where $\zeta=\zeta(z)$ is determined by
\begin{equation}
\frac{2}{3}(-\zeta)^{3/2}=\int_{1}^z\frac{\sqrt{t^2-1}}{t}\,\mathrm{d}t=
\sqrt{z^2-1}-\arccos\left(\frac{1}{z}\right), \quad 1\leq z<\infty,\label{def-zeta1}
\end{equation}
and
\begin{equation*}
\frac{2}{3}\zeta^{3/2}=\int_{z}^1\frac{\sqrt{1-t^2}}{t}\,\mathrm{d}t=\ln \frac{1+\sqrt{1-z^2}}{z}-\sqrt{1-z^2}, \quad 0<z\leq 1.
\end{equation*}
Here the branches are chosen so that $\zeta$ is real when $z$ is positive. $\mathrm{Ai}$ denotes the Airy function of the first kind. For the definitions and sizes of the coefficients $a_k(\zeta)$'s and $b_k(\zeta)$'s see \cite[p.368--369]{abram:1972}. In particular $a_0(\zeta)=1$ and $b_0(\zeta)$ is bounded.

\section{Proof of Lemma \ref{lattice-number}}

Let $h=g^{-1}: [0, 1/\pi]\rightarrow [0,1]$ be the inverse function of $g$. It is easy to check that
\begin{equation*}
\left|h^{(j)}(y)\right|\asymp y^{2/3-j} \quad \textrm{for $0<y\leq 1/\pi$, $j=1,2,3$}.
\end{equation*}
For $0\leq l<\mu-d/2+1$ we have
\begin{equation*}
\mathcal{N}_l(\mu)=\sum_{0<k-\frac{1}{4}\leq \mu g\left(\frac{l+d/2-1}{\mu}\right)}\left( \left\lfloor \mu h\left(\frac{k-1/4}{\mu} \right)-\frac{d}{2}+1 \right\rfloor-l+1\right).
\end{equation*}
Using $\lfloor t\rfloor=(t-1/2)-\psi(t)$ with $\psi$ the row-of-teeth function, we split the above sum into two.

The first one is
\begin{equation}
\sum_{0<k-\frac{1}{4}\leq \mu g\left(\frac{l+d/2-1}{\mu}\right)}\left( \mu h\left(\frac{k-1/4}{\mu} \right)-l-\frac{d-3}{2}  \right). \label{sum1}
\end{equation}
Applying the Euler-Maclaurin summation formula  yields
\begin{equation*}
\eqref{sum1}=\vol\left((\mu\mathcal{D})_l\right)-\frac{1}{4}\left(\mu-l-\frac{d-3}{2}\right)+O\left(\mu^{1/3} \right).
\end{equation*}

The second one is
\begin{equation}
\sum_{0<k-\frac{1}{4}\leq \mu g\left(\frac{l+d/2-1}{\mu}\right)} \psi\left(\mu h\left(\frac{k-1/4}{\mu}\right)-\frac{d}{2}+1 \right).\label{sum2}
\end{equation}
For $k\leq V:=\mu^{131/208}$ a trivial estimate produces an $O(\mu^{131/208})$ contribution. The rest part of \eqref{sum2} is divided into sums of the form
\begin{equation}
\sum_{M\leq k\leq M'\leq 2M} \psi\left(\mu h\left(\frac{k-1/4}{\mu}\right)-\frac{d}{2}+1 \right),\label{sum3}
\end{equation}
where $M=2^jV\lesssim \mu$, $j=0,1,2,\ldots$. By using \cite[Proposition 3]{Huxley:2003} with
\begin{equation*}
F(x)=\left(\frac{\mu}{M}\right)^{2/3}h\left(\frac{M}{\mu} x-\frac{1/4}{\mu}\right)+\frac{1-d/2}{N}
\end{equation*}
and $N=M^{2/3}\mu^{1/3}$, we get
\begin{equation*}
\eqref{sum3}=\sum_{M\leq k\leq M'\leq 2M} \psi\left(NF\left(\frac{k}{M}\right)\right)\lesssim \left(M^{5/3}\mu^{1/3}\right)^{\frac{131}{416}}(\log\mu)^{\frac{18627}{8320}}.
\end{equation*}
Summing this bound over $j$ finishes the proof.



\end{document}